\documentclass[12pt]{article}
\RequirePackage[colorlinks,citecolor=blue,urlcolor=blue,linkcolor=blue]{hyperref}
\hypersetup{
colorlinks = true,
citecolor=blue,
urlcolor=blue,
linkcolor=blue,
pdfauthor = {James Burridge, Alexey Kuznetsov, Mateusz Kwasnicki, Andreas Kyprianou},
pdfkeywords = {subordinator, Kendall identity, explicit transition density, Laplace transform identity, Bessel functions, Lambert W-function, Gamma function, complete monotonicity, generalized gamma convolutions},
pdftitle = {New families of subordinators with explicit transition probability semigroup},
pdfpagemode = UseNone
}
\usepackage{graphicx,xspace,colortbl}
\usepackage{amsmath,amsthm,amsfonts}
\usepackage{color}
\usepackage{fancybox}
\usepackage{epsfig}
\usepackage{subfig}
\usepackage{pdfsync}
    \oddsidemargin -1.0cm
    \evensidemargin -1.0cm
    \topmargin -1.5cm
    \textwidth 18.7cm
    \textheight 23.5cm
    \def\qed{\hfill$\sqcap\kern-8.0pt\hbox{$\sqcup$}$\\}
    \def\beq{\begin{eqnarray}}
    \def\eeq{\end{eqnarray}}
    \def\beqq{\begin{eqnarray*}}
    \def\eeqq{\end{eqnarray*}}

\DeclareMathOperator{\im}{Im}
    \def\p{{\mathbb P}}
    \def\e{{\mathbb E}}
    \def\r{{\mathbb R}}
    \def\c{{\mathbb C}}
    	
    \def\d{{\textnormal d}}

\newtheorem{theorem}{Theorem}

\newtheorem{proposition}{Proposition}

\theoremstyle{definition}

\newtheorem{remark}{Remark}


\title{New families of subordinators with explicit transition probability semigroup}

\author{
{
J. Burridge
\footnote{Department of Mathematics, University of Portsmouth, Lion Gate Building, Lion Terrace, Portsmouth, 
Hampshire, PO1 3HF, U.K.. Email: james.burridge@port.ac.uk}}
,\, 
{
A. Kuznetsov
\footnote{Department of Mathematics and Statistics, 
York University, 
4700 Keele Street, 
Toronto, Ontario, 
M3J 1P3, Canada. Email: kuznetsov@mathstat.yorku.ca}}
,\, 
{
M. Kwa\'snicki
\footnote{
Institute of Mathematics and Computer Science, Wroclaw University of Technology, ul. Wybrzeze Wyspia\'nskiego 27,
50-370 Wroclaw, Poland. Email: mateusz.kwasnicki@pwr.wroc.pl}}
,\, 
{
A. E. Kyprianou
\footnote{Department of Mathematical Sciences, University of Bath, Claverton Down, Bath, BA2 7AY, U.K. Email: a.kyprianou@bath.ac.uk}}
}
\date{\footnotesize This version: \today}

\begin{document}

\maketitle

\begin{abstract}
\bigskip
There exist only a few known examples of subordinators for which the transition probability density can be computed explicitly along side an expression for its L\'evy measure and Laplace exponent. Such examples are useful in several areas of applied probability. For example, they are used in mathematical finance for modeling stochastic time change. They appear in combinatorial probability to construct sampling formulae, which in turn is related to a variety of issues in the theory of coalescence models. Moreover, they have also been extensively used in the potential analysis of subordinated Brownian motion in dimension $d\geq 2$. In this paper, we show that Kendall's classic identity for spectrally negative L\'evy processes can be used to construct new families of subordinators with explicit transition probability semigroups. We describe the properties of these new subordinators and emphasise some interesting connections with  explicit and previously unknown Laplace transform identities and with complete monotonicity properties of certain special functions. 
\end{abstract}

{\vskip 0.5cm}
 \noindent {\it Keywords}: subordinator, Kendall identity, explicit transition density, Laplace transform identity, Bessel functions, Lambert W-function, Gamma function, complete monotonicity, generalized gamma convolutions
{\vskip 0.5cm}
 \noindent {\it 2010 Mathematics Subject Classification }: 60G51, 44A10

\newpage

\section{Introduction}\label{sec_introduction}
Subordinators with explicit transition semigroups have proved to be  objects of broad interest on account of their application in a variety of different fields. We highlight three of them here. The first case of  interest occurs in mathematical finance, where   subordinators are used to perform time-changes of other stochastic processes to model the effect of stochastic volatility in asset prices, see for example \cite{CGMY} and \cite{Cont}.  A second application occurs in the theory of potential analysis of subordinated Brownian motion in high dimensions, which  has undergone significant improvements thanks to the study of a number of key examples, see for example   \cite{Song_Vondracek}  and \cite{KSV}. A third area in which analytic detail of the transition semigroup of a subordinator can lead to new innovations is that of combinatorial stochastic processes. A variety of sampling identities are intimately related to the range of particular subordinators, see for example \cite{Gnedin}. Moreover this can also play an important role in the analysis of certain coalescent processes, see \cite{Pitman}.

In this paper we will use a simple idea based on Kendall's identity for spectrally negative L\'evy processes to construct some new families of subordinators with explicit transition semigroup.  Moreover, we describe their properties, with particular focus on the associated L\'evy measure and Laplace exponent in each of our new examples. The inspiration for the main idea in this paper came about by digging deeper into \cite{Burridge}, where a remarkable identity  appears in the analysis of the relationship between the first passage time of a random walk and the total progeny of  a discrete-time, continuous-state branching process.

The rest of the paper is organised as follows. In the next section we remind the reader of Kendall's identity and thereafter, proceed to our main results. These results give a simple method for generating examples of subordinators with explicit transition semigroups as well as simultaneously gaining access to analytic features of their L\'evy measure and Laplace exponent.  In Section \ref{sec_examples} we put our main results to use in generating completely new examples. Finally, in Section \ref{sec_applications} we present some applications of 
these results to explicit Laplace transform identities and complete monotonicity properties of certain special functions.

\section{Kendall's identity and main results}

Let $\xi$ be a spectrally negative L\'evy process with Laplace exponent defined by 
 \begin{equation}
\psi(z)=\ln \e[\exp(z \xi_1)],\qquad  z\ge 0.
\label{Laplace}
\end{equation}
In general, the exponent $\psi$ takes the form
\[
\psi({z}) = a{z} + \frac{1}{2}\sigma^2{z}^2 + \int_{(-\infty,0)} ({\rm e}^{{z} x} -1 - {z} x \mathbf{1}_{(x> - 1)})\Pi_\xi(\d x)
\]
where $a\in\mathbb{R}$, $\sigma^2\geq 0$ and $\Pi_\xi$ is a measure concentrated on $(-\infty,0)$ that satisfies $\int_{(-\infty, 0)} (1\wedge x^2)\Pi_\xi(\d x)<\infty$, and is called the L\'evy measure.  From this definition, it is easy to deduce that $\psi$
 is convex on $[0,\infty)$, and it satisfies 
$\psi(0)=0$ and $\psi(+\infty)=+\infty$.  Hence, for every $q>0$, there exists a unique solution  $z=\phi(q) \in (0,\infty)$ to 
the equation $\psi(z)=q$. We will define $\phi(0)=\phi(0^+)$. Note that $\phi(0)=0$ if and only if 
$\psi'(0)\ge 0$, which, by a simple differentiation of (\ref{Laplace}), is equivalent to $\e[\xi_1]\ge 0$. 

 Let us define the first passage times  
 \begin{equation}
 \tau_x^+:=\inf\{ t>0 \; : \; \xi_t>x\}, \qquad x\geq 0.
 \label{firstpassage}
 \end{equation}
 It is well-known
 (see Theorem 3.12 and Corollary 3.13 in \cite{Kyprianou}) that 
$\{\tau_x^+\}_{x\ge 0}$ is a subordinator, killed at rate $\phi(0)$, whose Laplace exponent, $\phi(q)$, satisfies
\beqq
\e\left[{\rm e}^{-q \tau_x^+ } {\bf 1}_{\{\tau_x^+ < +\infty\}} \right]={\rm e}^{-x \phi(q)}, \qquad q\geq 0.
\eeqq 
In general, the Laplace exponent $\phi$ is a Bernstein function. In particular, it  takes the form
\begin{equation}
\phi(z) = \kappa + \delta z + \int_{(0,\infty)} (1-{\rm e}^{-z x})\Pi(\d x),
\label{Laplaceexponent}
\end{equation}
for some  $\kappa,\delta\geq 0$ and  measure, $\Pi$, concentrated on $(0,\infty)$, satisfying $\int_{(0,\infty)}(1\wedge x)\Pi(\d x)<\infty$. The constant $\kappa$ is called the killing rate and $\delta$ is called the drift coefficient.

Kendall's identity (see \cite{ECP1038} and Exercise 6.10 in \cite{Kyprianou}) states that
\beq\label{Kendalls_identity}
\int_y^{\infty} \p(\tau_x^+ \le t) \frac{\d x}{x}=\int_0^t \p(\xi_s>y) \frac{\d s}{s}.  
\eeq
If the distribution of $\xi_t$ is absolutely continuous for all $t>0$ 
then the measure $\p(\tau_x^+ \in \d t)$ is also absolutely continuous and has the density
\beq\label{Kendalls_identity_v2}
\p(\tau_x^+ \in \d t)=\frac{x}{t} p_{\xi}(t,x) \d t,\qquad x,t> 0,
\eeq
where $p_{\xi}(t,x)\d x=\p(\xi_t \in \d x)$. 
On the one hand, one may view Kendall's identity as an analytical consequence 
of 
the Wiener-Hopf factorisation for spectrally negative L\'evy processes. On the other, its probabilistic roots are related to certain combinatorial arguments associated to random walks in the spirit of the classical ballot problem.

Kendall's identity gives a very simple way of constructing new subordinators with explicit transition semigroup. Indeed, if we start with a spectrally negative process $\xi$ for which the transition probability density $p_{\xi}(t,x)$ is known, then $\tau_x^+$ is the desired subordinator with the explicit transition density given by \eqref{Kendalls_identity_v2}. One way to build a spectrally negative process with known transition density (as indeed we shall do below) is  as follows: start with a subordinator $X$, which has an explicit transition probability density and then define the spectrally negative process $\xi_t= t-X_t$. This also gives us a spectrally negative process with explicit transition probability density.   The above approach was used in older statistics literature (see \cite{Kendall1957,Letac1990}) in order to  generate new examples of infinitely divisible distributions. Our goal in this paper is to systematically apply this method to create new families of subordinators, describe their L\'evy measure and Laplace exponent, and to study their properties.


 Before stating our main theorem, let us introduce some notation and definitions. We write ${\mathcal N}$ for the class of all subordinators, started from zero, having zero drift and zero killing rate. The Laplace exponent of a subordinator $Y\in {\mathcal N}$ is defined by
$\Phi_Y(z):=-\ln \e\left[\exp(-z Y_1) \right]$, $z\ge 0$. From the L\'evy-Khinchine formula
we know that 
\beq\label{Levy_Khinchine}
\Phi_Y(z)=\int_{(0,\infty)} \left(1-{\rm e}^{-zx}\right) \Pi_Y(\d x), \;\;\; z\ge 0, 
\eeq 
where $\Pi_Y$ is the Levy measure of $Y$. 
When it exists, we will denote the transition probability density function
of $Y$ as $p_Y(t,x):=\frac{\d}{\d x} {\mathbb P}(Y_t \le x)$, $x>0$.

\begin{theorem}\label{theorem_main}
For $X\in {\mathcal N}$ and $q>0$, define $\phi(q)$ as the unique solution to 
\beq\label{eqn_inverse_1}
z-\Phi_X(z)=q.
\eeq
Define $\phi(0)=\phi(0+)$. Then we have the following:
\begin{itemize} 
\item[(i)] The function $\Phi_Y(z):=\phi(z)-\phi(0)-z$ is the Laplace exponent of a subordinator $Y \in {\mathcal N}$. 
\item[(ii)] If the transition semi-group of $X$ is absolutely continuous with respect to Lebesgue measure, then the transition semi-group of $Y$ is given by
\beq\label{formula_pY_pX}
p_Y(t,y)=\frac{t}{t+y} {\rm e}^{\phi(0) t} p_X\left(t+y, y\right), \;\;\; y>0
,
\eeq
and the Levy measure of $Y$ is given by
\beq\label{formula_PiY}
\Pi_Y(\d y)=\frac{1}{y} p_X(y, y)\d y, \;\;\; y>0. 
\eeq
\end{itemize}
\end{theorem}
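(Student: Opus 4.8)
The plan is to realise $Y$ as the first-passage subordinator of a concrete spectrally negative L\'evy process with its killing removed. Given $X\in{\mathcal N}$, set $\xi_t:=t-X_t$. Since $X$ is a driftless, killing-free subordinator, $\Phi_X$ is finite on $[0,\infty)$ and $\e[\ee^{z\xi_1}]=\ee^{z}\,\e[\ee^{-zX_1}]=\ee^{z-\Phi_X(z)}$, so $\xi$ is a spectrally negative L\'evy process (all its jumps come from $-X$) with Laplace exponent $\psi(z)=z-\Phi_X(z)$, having paths of bounded variation, zero Gaussian part and drift coefficient $1$. Hence the function $\phi$ defined by \eqref{eqn_inverse_1} is precisely the right-inverse of $\psi$ from Kendall's setup, and by the quoted fluctuation theory (Theorem 3.12 and Corollary 3.13 in \cite{Kyprianou}) the process $\{\tau_x^+\}_{x\ge0}$ of \eqref{firstpassage} built from this $\xi$ is a subordinator killed at rate $\phi(0)$ with Laplace exponent $\phi$; write $\phi(z)=\phi(0)+\delta z+\int_{(0,\infty)}(1-\ee^{-zy})\,\Pi(\d y)$ as in \eqref{Laplaceexponent}.

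For part (i) I would pin down the drift and killing of $\tau^+$. The killing rate is $\phi(0)$ by the cited result. For the drift, $\Phi_X$ is a Bernstein function with zero drift, so $\Phi_X(u)/u\to0$ and hence $\psi(u)/u\to1$ as $u\to\infty$; since $\phi(z)\to\infty$ this gives $\delta=\lim_{z\to\infty}\phi(z)/z=\lim_{z\to\infty}\phi(z)/\psi(\phi(z))=1$. Therefore
\[
\Phi_Y(z)=\phi(z)-\phi(0)-z=\int_{(0,\infty)}\bigl(1-\ee^{-zy}\bigr)\,\Pi(\d y),
\]
which is manifestly the Laplace exponent of a subordinator in ${\mathcal N}$, with L\'evy measure $\Pi_Y=\Pi$, i.e.\ the L\'evy measure of $\tau^+$. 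This settles (i).

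For the transition semigroup in (ii), assume $X_t$ has density $p_X(t,\cdot)$. Since $X_t\ge0$, the law of $\xi_t=t-X_t$ has density $p_\xi(t,x)=p_X(t,t-x)\mathbf{1}_{\{x<t\}}$, so $\xi$ meets the hypothesis behind \eqref{Kendalls_identity_v2}, giving $\p(\tau_x^+\in\d t)=\tfrac{x}{t}\,p_X(t,t-x)\,\d t$. On the other hand, $\phi(z)=\phi(0)+z+\Phi_Y(z)$ rearranges to
\[
\e\bigl[\ee^{-z\tau_x^+}\mathbf{1}_{\{\tau_x^+<\infty\}}\bigr]=\ee^{-x\phi(z)}=\ee^{-x\phi(0)}\,\ee^{-xz}\,\ee^{-x\Phi_Y(z)}=\ee^{-x\phi(0)}\,\e\bigl[\ee^{-z(x+Y_x)}\bigr],
\]
so by uniqueness of Laplace transforms $\p(\tau_x^+\in\d t)=\ee^{-x\phi(0)}\,\p(x+Y_x\in\d t)$. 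Equating densities for $t>x$, substituting $t=x+y$, and renaming the (time) index $x$ as $t$ yields \eqref{formula_pY_pX}. For the L\'evy measure I would divide \eqref{formula_pY_pX} by $t$ and let $t\downarrow0$: the right-hand side tends to $\tfrac1y p_X(y,y)$, while $t^{-1}\p(Y_t\in\cdot)$ converges vaguely on $(0,\infty)$ to $\Pi_Y$, forcing $\Pi_Y(\d y)=\tfrac1y p_X(y,y)\,\d y$; equivalently one may let $x\downarrow0$ directly in Kendall's identity, using $x^{-1}\p(\tau_x^+\in\cdot)\to\Pi$ vaguely on $(0,\infty)$ together with $p_X(t,t-x)\to p_X(t,t)$.

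The Laplace-exponent bookkeeping is routine; the one genuine technical point is this last step, namely upgrading the vague convergence of the rescaled laws to the claimed pointwise formula for the density of $\Pi_Y$. This needs a little regularity of $p_X$ near the diagonal $\{t=y\}$ (joint continuity, or at least continuity of $s\mapsto p_X(s,y)$ at $s=y$ and of $u\mapsto p_X(t,u)$ at $u=t$ for Lebesgue-almost every parameter), after which a dominated-convergence argument, with $\int_{(0,\infty)}(1\wedge y)\,\Pi_Y(\d y)<\infty$ controlling the tails, closes the gap; absent such regularity \eqref{formula_PiY} is understood to hold for Lebesgue-almost every $y$. The degenerate case $X\equiv0$ (where $\psi(z)=z$, $\phi(z)=z$, $\Phi_Y\equiv0$ and $Y\equiv0$) should be checked separately and is trivial.
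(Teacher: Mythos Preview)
Your proof is correct and follows essentially the same route as the paper: realise $\phi$ as the Laplace exponent of the first-passage subordinator of $\xi_t=t-X_t$, read off drift $1$ and killing $\phi(0)$ to land $\Phi_Y$ in $\mathcal N$, invoke Kendall's identity \eqref{Kendalls_identity_v2} for the density, and pass to the limit $t^{-1}\p(Y_t\in\cdot)\to\Pi_Y$ for the L\'evy measure. Your explicit Laplace-transform argument linking $\tau_x^+$ to $x+Y_x$ and your caveat about the regularity needed to identify the pointwise density of $\Pi_Y$ are welcome bits of care that the paper leaves implicit.
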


\begin{proof}
The function $\phi(q)$ defines the Laplace exponent of the subordinator corresponding to the first passage process (\ref{firstpassage}). Moreover, appealing to the standard facts that the   
drift coefficient of $\phi$ is equal to $\lim_{q\to\infty}\phi(q)/q$ and that $\phi(\infty) = \infty$, as well as the fact that $X$ has zero drift, one notes that 
\[
\lim_{q\to\infty}\frac{\phi(q)}{q} = \lim_{q\to\infty}\frac{\phi(q)}{\phi(q)  - \Phi_X(\phi(q))} =  \lim_{q\to\infty}\frac{1}{1 - \Phi(\phi(q))/\phi(q)}=1.
\]
Moreover, noting that $\phi(0)$ is another way of writing the killing rate of the subordinator corresponding to $\phi$,  it follows that  the function
$\phi_Y(z)=\phi(z)-\phi(0)-z$ belongs to the class ${\mathcal N}$. 
Formula \eqref{formula_pY_pX} follows at once from Kendall's identity as it appears in  \eqref{Kendalls_identity_v2}. The formula \eqref{formula_PiY} follows from the fact that 
\beq\label{Levy_measure_as_a_limit}
\Pi_Y(\d x)=\lim\limits_{t\to 0^+} \frac{1}{t} \p(Y_t \in \d x),\qquad x>0,
\eeq
see for example the proof of Theorem 1.2 in  \cite{Bertsub}. 
\end{proof}

In constructing new subordinators, the above theorem has deliberately  eliminated certain scaling parameters. For example, 
one may consider working more generally  with the spectrally negative process 
$\xi_t=\lambda t-X_t$, $t\geq0$ for some $\lambda>0$. However, this can be reduced to the case that $\lambda = 1$ by factoring out the constant $\lambda$ from $\xi$ and noting that $\lambda^{-1}X$ is still a subordinator.

\label{discussion_repeated_Kendall}
Theorem \ref{theorem_main} raises the following natural question concerning its  iterated use. Suppose we have started from a spectrally negative process, say  $\xi^{(1)}$  and have constructed a corresponding subordinator $Y^{(1)}$. Can we take this subordinator, define a new, spectrally negative L\'evy process $\xi^{(2)}_t: = t - Y^{(1)}_t$, $t\geq 0$, and  feed it into back into Theorem \ref{theorem_main} to obtain a new subordinator $Y^{(2)}$? The answer is essentially ``no": one can check that the subordinator $Y^{(2)}$ could also
 be obtained by one application of this procedure starting from the scaled process $\theta \xi_{ct}$ for appropriate constants $\theta, c>0$. In other words, applying the Kendall identity trick twice does not give us fundamentally new processes.

\bigskip

Recent potential analysis of subordinators has showed particular interest in the case of {\it complete subordinators}, following their introduction in  \cite{SV2006}. The class of complete  subordinators can be defined by the analytical structure of their Laplace exponents, which are also known as {\it complete Bernstein functions}.
 In addition to the representation in (\ref{Laplaceexponent}), a function $f$ on $(0, \infty)$ is a complete Bernstein function (CBF in short) if
\[
 f(z) = c_0 + c_1z + \frac{1}{\pi} \int_{(0, \infty)} \frac{z}{z + s} \, \frac{m(\d s)}{s}
\]
for some $c_0, c_1 \ge 0$ and a $\sigma$-finite positive measure $m$ on $(0, \infty)$ satisfying $\int_{(0, \infty)} \min(s^{-1}, s^{-2}) m(\d s) < \infty$.  Equivalently, $f$ is the Laplace exponent of a (possibly killed) subordinator $X$, whose L\'evy measure has a completely monotone density 
\beqq
\pi_X(x)=\int_0^{\infty} {\rm e}^{-x s}m(\d s). 
\eeqq
Let us denote $\c^+:=\{z\in \c \; : \; \im(z)>0\}$ and similarly $\c^-:=\{z\in \c \; : \; \im(z)<0\}$.
It is known (see Theorem 6.2 in \cite{SSV2012}) that CBFs extend to analytic functions that map $\c \setminus (-\infty, 0]$ into $\c \setminus (-\infty, 0]$ and belong to 
the class of {\it Pick functions}, that is functions $g$ analytic in $\c^+$ such that 
$g(z) \in \overline{\c^+}$ for all $z\in \c^+$. Conversely, any Pick function which takes nonnegative real values on $(0, \infty)$ 
is a CBF. For more information on CBFs see \cite{SSV2012}.

Our next result below investigates sufficient conditions on $\xi$ to ensure that the resulting subordinator, $Y_t=\tau_t^+$,  is a complete subordinator. 
\begin{proposition}\label{thm_compl_monotone}
Let $\xi$ be a spectrally negative process with a L\'evy density  $\pi_{\xi}(x)$, $x<0$. 
If $\pi_{\xi}(-x)$ is a completely monotone function, then the subordinator $Y$ has a L\'evy density, say $\pi_Y(x)$, and it is completely monotone. 
\end{proposition}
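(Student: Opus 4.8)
The plan is to prove that the Laplace exponent $\phi=\psi^{-1}$ of $Y=\tau^{+}_{\cdot}$ is a complete Bernstein function; since $\phi$ is in any case a Bernstein function, the equivalence recalled above between CBFs and completely monotone L\'evy densities then shows that $Y$ has a L\'evy density which is completely monotone. A preliminary reduction disposes of the killing: if $\e[\xi_{1}]<0$ I would perform the exponential change of measure with density $\exp(\phi(0)\xi_{1})$, which is legitimate because $\psi(\phi(0))=0$. Under it $\xi$ stays spectrally negative, $\psi(\cdot)$ becomes $\tilde\psi(z)=\psi(z+\phi(0))$ and $\pi_{\xi}(x)$ becomes $e^{\phi(0)x}\pi_{\xi}(x)$, so $\tilde\pi_{\xi}(-x)=e^{-\phi(0)x}\pi_{\xi}(-x)$ is still completely monotone (a product of two completely monotone functions), while $\tilde\psi$ is increasing on $[0,\infty)$ with $\tilde\psi(0)=0$, $\tilde\psi(\infty)=\infty$ and $\tilde\psi^{-1}=\phi-\phi(0)$. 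As $\phi$ is a CBF iff $\phi-\phi(0)$ is, we may henceforth assume $\psi(0)=0$ and $\psi$ increasing on $[0,\infty)$, so that $\psi\ge0$ and $\psi'(0^{+})=\e[\xi_{1}]\ge0$ there.

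The first substantive step is to show that $g(z):=\psi(z)/z$ is a complete Bernstein function. Writing $\pi_{\xi}(-x)=\int_{(0,\infty)}e^{-xs}\,\nu(\d s)$ by Bernstein's theorem, substituting into the L\'evy--Khinchine representation of $\psi$, and using Tonelli's theorem (licensed by $\int_{(-\infty,0)}(1\wedge x^{2})\pi_{\xi}(x)\,\d x<\infty$) to carry out the inner integral over $x$, one obtains --- after a rearrangement that keeps the two $z$-independent pieces together, so that all the integrals involved converge --- a representation of $\psi(z)/z$ in the canonical form
\[
\frac{\psi(z)}{z}=c_{0}+c_{1}z+\frac1\pi\int_{(0,\infty)}\frac{z}{z+s}\,\frac{m(\d s)}{s}
\]
of a complete Bernstein function, with $c_{0}=\psi'(0^{+})\ge0$, $c_{1}=\tfrac12\sigma^{2}\ge0$, and $m$ satisfying $\int_{(0,\infty)}\min(s^{-1},s^{-2})\,m(\d s)<\infty$. (Alternatively one verifies directly that $\psi(z)/z$ is analytic on $\c\setminus(-\infty,0]$, is nonnegative on $(0,\infty)$, and satisfies $\im(\psi(z)/z)=\im(z)\big(\tfrac12\sigma^{2}+\int_{(0,\infty)}\nu(\d s)/(s\,|z+s|^{2})\big)\ge0$ for $z\in\c^{+}$, hence is a Pick function nonnegative on $(0,\infty)$.)

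The final step, which I expect to be the crux, is to deduce from ``$g(z)=\psi(z)/z$ is a CBF'' that $\phi=\psi^{-1}$ is a CBF; I would do this by a fixed-point iteration. Apart from the trivial case $g(z)=c_{1}z$ (where $\psi(z)=c_{1}z^{2}$ and $\phi(z)=\sqrt{z/c_{1}}$ is evidently a CBF), $g$ is concave and $w\mapsto g(w)/w$ is strictly decreasing on $(0,\infty)$. Fix $z>0$: then $F_{z}(w):=z/g(w)$ is a continuous strictly decreasing self-map of $(0,\infty)$ whose unique fixed point is $\phi(z)$, and it has no $2$-cycle (a cycle $F_{z}(a)=b$, $F_{z}(b)=a$ would force $g(a)/a=g(b)/b$ and hence $a=b$); so the increasing map $F_{z}\circ F_{z}$ has $\phi(z)$ as its only fixed point, that point is attracting because strict decrease of $g(w)/w$ gives $\big(F_{z}'(\phi(z))\big)^{2}=\big(\phi(z)g'(\phi(z))/g(\phi(z))\big)^{2}<1$, and the standard analysis of iterates of a decreasing map then yields $F_{z}^{\,n}(1)\to\phi(z)$. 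Now put $\Lambda_{n}(z):=g\big(F_{z}^{\,n-1}(1)\big)$, so that $F_{z}^{\,n}(1)=z/\Lambda_{n}(z)$ and $\Lambda_{n+1}(z)=g\big(z/\Lambda_{n}(z)\big)$: the constant $\Lambda_{1}=g(1)$ is a CBF, and if $\Lambda_{n}$ is a CBF then so is $z/\Lambda_{n}(z)$, hence so is $g(z/\Lambda_{n}(z))=\Lambda_{n+1}$ (composition of CBFs). Therefore every $\Lambda_{n}$ is a CBF, $\Lambda_{n}(z)\to g(\phi(z))=z/\phi(z)$ pointwise on $(0,\infty)$, and since CBFs are closed under pointwise limits, $z/\phi(z)$ is a CBF; one more application of the involution $f\mapsto z/f(z)$ then shows that $\phi$ is a CBF, completing the proof. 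The delicate points are the measure-theoretic bookkeeping in the rearrangement of the first step and the global (rather than merely local) convergence of the iteration; an alternative to the iteration is to quote a known inversion result --- the inverse of a Bernstein function $\psi$ with $\psi(z)/z$ a CBF is a CBF --- or to count the solutions of $\psi(z)=q$ in $\c^{+}$ by the argument principle.
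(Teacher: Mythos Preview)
Your argument is correct and essentially self-contained, but it takes a different route from the paper's proof. The paper factors $\psi(z)=(z-c)\Phi_H(z)$ via the Wiener--Hopf factorisation (with $c=\phi(0)$ and $H$ the descending ladder height subordinator), invokes Rogers' theorem to conclude that complete monotonicity of $\pi_\xi(-x)$ forces $\Phi_H$ to be a CBF, and then quotes Nakamura's result that the inverse of $z\mapsto z\Phi(z)$ is a CBF whenever $\Phi$ is; a shift by $c$ finishes. You instead (i) handle the killing by an Esscher change rather than the algebraic shift, (ii) prove directly from the Pick-function characterisation that $g(z)=\psi(z)/z$ is a CBF --- this is the content of Rogers' theorem in the present setting, since $g=\Phi_H$ after the reduction --- and (iii) recover Nakamura's inversion lemma by an explicit fixed-point iteration $\Lambda_{n+1}(z)=g(z/\Lambda_n(z))$, using closure of CBFs under composition, under the involution $f\mapsto z/f$, and under pointwise limits. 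The global convergence of the iteration is the one genuinely delicate point; it goes through because the attracting condition $(F_z'(\phi(z)))^2<1$ (which holds whenever $g$ is not of the form $c_1 z$, by concavity) pins down the sign of $(F_z\!\circ\! F_z)(w)-w$ near the fixed point, and uniqueness of the fixed point then propagates that sign to all of $(0,\phi(z))$ and $(\phi(z),\infty)$, forcing monotone convergence of the even and odd subsequences. What your approach buys is a proof that does not rely on the two external references; what the paper's approach buys is brevity. Your closing remark that one could ``quote a known inversion result'' is exactly the paper's choice (Nakamura).
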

\begin{proof}
The proof is based on the following result (see Proposition 2 in \cite{Nakamura}): 
{\it If $\Phi$ is a CBF and $\phi$ is the inverse function of the strictly increasing function $z \in (0,\infty) \mapsto z \Phi(z)$, then $\phi$ is also a CBF.}

Let $H$ denote the descending ladder height process for $\xi$ (which is a subordinator, possibly a killed one), and let $\Phi_H$ be its Laplace exponent. Then $\psi(z) = (z - c) \Phi_H(z)$, where $c = \phi(0)$ (see formula (9.1) in \cite{Kyprianou}). 
Theorem 2 in \cite{Rogers1983} tells us that if $\pi_\xi(-x)$ is completely monotone, then $\pi_H(x)$ is completely monotone, and therefore $\Phi_H$ is a CBF. Let $\tilde\Phi_H(z) = \Phi_H(z + c)$ and $\tilde\psi(z) = \psi(z + c)$, so that $\tilde\psi(z) = z \tilde\Phi_H(z)$. Note that $z = \phi(q)$ if and only if $\psi(z) = q$, that is, $\tilde\psi(z - c) = q$. Therefore, $\phi(q) = \tilde{\psi}^{-1}(q) + c$. Since $\tilde\Phi_H(z)$ is a CBF, by the above-mentioned result, $\tilde\psi^{-1}$ is a CBF. It follows that also $\phi$ is a CBF, and therefore $\pi_Y(x)$ is completely monotone.
\end{proof}

\begin{remark}
A curiosity that arises from the above result is that when $\xi$ is a spectrally negative L\'evy process of unbounded variation and has a L\'evy density which is  completely monotone, then it is automatically the case that there is a version of $\xi$'s transition density for which  $p_\xi(t,0)/t$ is completely monotone.  Indeed this follows directly from Kendall's identity and (\ref{Levy_measure_as_a_limit}). Referring to the discussion following Proposition 2.2 in \cite{Bertsub}, 
it follows that the potential density of the inverse local time at zero of $\xi$, which is proportional to $p_\xi(t,0)$, is therefore the product of a linear function and a completely monotone function.
\end{remark}

One corollary of Proposition \ref{thm_compl_monotone} is that the transformation described in Theorem \ref{theorem_main}, which maps a subordinator $X$ into a subordinator $Y_t=\tau_t^+$, preserves the class of complete subordinators. As our next result shows, 
this transformation also preserves an important subclass of complete subordinators. 
We define the class of 
{\it Generalized Gamma Convolutions} (GGC) as the family of infinite divisible distributions on $(0,\infty)$ having L\'evy density
$\pi(x)$, such that the function $x\pi(x)$ is completely monotone. In other words, 
\beqq
x\pi(x) =\int_0^{\infty} {\rm e}^{-xy} U(\d y),
\eeqq
for some $\sigma$-finite and positive measure $U$, which is called {\it Thorin measure}. 
The measure $U$ must satisfy the following integrability condition
\beqq
\int_0^{\infty} \left(|\ln(y)| \wedge \tfrac{1}{y} \right) U(\d y)<\infty
\eeqq
in order for $\Pi(\d x)=\pi(x)\d x$ to be a L\'evy measure of a positive random variable. 
The class of GGC can also be defined as the smallest class of distributions on $(0,\infty)$, which contains all gamma distributions 
and which is closed under convolution and weak convergence. 
See \cite{Bondesson} and \cite{Song_Vondracek} for additional information on the class of GGC and its distributional properties. 
We say that a subordinator $X$ belongs to the Thorin class ${\mathcal T}$ if the distribution of $X_1$ is GGC.
The family ${\mathcal T}_0$ is defined as the subclass of all subordinators in ${\mathcal T}$ which  
have zero linear drift.

\begin{proposition}\label{prop_Thorin_class}
Assume that $X\in {\mathcal T}_0$ and $Y$ is a subordinator constructed in Theorem \ref{theorem_main}. Then $Y \in {\mathcal T}_0$, in particular 
the function $y \pi_Y(y)=p_X(y,y)$ is completely monotone. 
\end{proposition}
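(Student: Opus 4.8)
The plan is to recast the statement entirely in terms of Stieltjes functions. For any subordinator $Z\in{\mathcal N}$ with L\'evy density $\pi_Z$ one has $\Phi_Z'(z)=\int_0^\infty x{\rm e}^{-zx}\pi_Z(x)\,\d x$, so that $Z\in{\mathcal T}_0$ if and only if $x\pi_Z(x)$ is completely monotone, if and only if $\Phi_Z'$ is a Stieltjes function (its representing measure then being the Thorin measure of $Z$; the fact that $\Phi_Z$ is a genuine Bernstein function with zero drift and zero killing rules out the extra $z^{-1}$ and constant terms permitted in a general Stieltjes representation). Since $Y\in{\mathcal N}$ by Theorem \ref{theorem_main}(i), it therefore suffices to prove that $\Phi_Y'$ is a Stieltjes function. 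We may also assume $X$ is non-degenerate, as otherwise $Y\equiv 0$ and there is nothing to prove.

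I would first record two consequences of the hypothesis $X\in{\mathcal T}_0$. On one hand, $\Phi_X'$ is a Stieltjes function, by the identity just displayed. On the other hand $\pi_X(x)=x^{-1}\,(x\pi_X(x))$ is completely monotone, being a product of two completely monotone functions, so Proposition \ref{thm_compl_monotone} applied to $\xi_t=t-X_t$ --- whose L\'evy density satisfies $\pi_\xi(-x)=\pi_X(x)$ --- shows that $Y$ has a completely monotone L\'evy density, whence $\Phi_Y$ is a complete Bernstein function, and hence so is $\phi(z)=z+\phi(0)+\Phi_Y(z)$.

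Next I would differentiate the defining relation $\phi(q)-\Phi_X(\phi(q))=q$ implicitly to obtain $\phi'(q)\,\bigl(1-\Phi_X'(\phi(q))\bigr)=1$; in particular $0<\Phi_X'(\phi(q))<1$ for every $q>0$, and, writing $S:=\Phi_X'\circ\phi$,
\[
\Phi_Y'(q)=\phi'(q)-1=\frac{1}{1-S(q)}-1=\frac{S(q)}{1-S(q)}.
\]
Since $\Phi_X'$ is a Stieltjes function and $\phi$ is a complete Bernstein function, the composition $S=\Phi_X'\circ\phi$ is again a Stieltjes function --- equivalently, $1/S=(1/\Phi_X')\circ\phi$ is a composition of two complete Bernstein functions, hence a complete Bernstein function. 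Finally, because $S$ is non-increasing with $S(q)<1$ for all $q>0$, the limit $(1/S)(0^+)=1/S(0^+)$ is at least $1$, so subtracting the constant $1$ keeps us inside the class of complete Bernstein functions; thus $1/S-1$ is a complete Bernstein function, and its reciprocal $\bigl(1/S-1\bigr)^{-1}=S/(1-S)=\Phi_Y'$ is a Stieltjes function. This yields $Y\in{\mathcal T}_0$, and since $y\pi_Y(y)=p_X(y,y)$ by \eqref{formula_PiY}, the function $p_X(y,y)$ is in particular completely monotone.

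The engine of the argument is the short implicit-differentiation formula for $\Phi_Y'$, combined with two standard facts about these function classes (see \cite{SSV2012}): that a composition of complete Bernstein functions is a complete Bernstein function, and that a non-trivial positive function on $(0,\infty)$ is Stieltjes precisely when its reciprocal is a complete Bernstein function. The points needing care are the verification that $\phi$ is a complete Bernstein function --- which is exactly where Proposition \ref{thm_compl_monotone} is used --- the bookkeeping that rules out spurious terms in the various Stieltjes and complete Bernstein representations, and the dispatch of the degenerate case $X\equiv 0$; the remaining steps are routine.
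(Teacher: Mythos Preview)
Your proof is correct and follows essentially the same strategy as the paper's: implicit differentiation of $\phi-\Phi_X\circ\phi=\mathrm{id}$, invocation of Proposition~\ref{thm_compl_monotone} to ensure $\phi$ is a complete Bernstein function, and then closure of the relevant function class under composition. The only difference is terminological --- you work with Stieltjes functions and CBFs (exploiting the reciprocal correspondence between them, which forces the small extra check that $(1/S)(0^+)\ge 1$), whereas the paper phrases the identical argument in the language of Pick functions via Bondesson's criterion that $Z\in\mathcal T_0$ iff $-\Phi_Z'$ is Pick, writing $-\phi'=H\circ G\circ F$ with $F=\phi$, $G=-\Phi_X'$, $H(w)=-1/(1+w)$ and then simply adding the constant $1$.
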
 
\begin{proof}
We will need the following result (see Theorem 3.1.2 in \cite{Bondesson}): {\it Let $\eta$ be a positive random variable and 
define $f(z):=\ln \e\left[ {\rm e}^{-z \eta}\right]$. Then $\eta$ has a GGC distribution if and only if $f'(z)$ is a Pick function.}  

 Assume that $X \in {\mathcal T}_0$. According to the above result, $-\Phi_X'(z)$ is a Pick function. 
 Let $Y$ be a subordinator constructed from $X$ in Theorem \ref{theorem_main}. We recall that 
 $\phi(q)$ is defined as the solution to $z-\Phi_X(z)=q$ and $\Phi_Y(z)=\phi(z)-\phi(0)-z$. 
 Since $X\in {\mathcal T}_0$, it has a completely monotone L\'evy density, thus according to Proposition \ref{thm_compl_monotone}, the same is true for $Y$. Therefore, the three functions 
 $\Phi_X(z)$, $\Phi_Y(z)$ and $\phi(z)$ are Pick functions. 
 Taking derivative of the identity $\phi(q)-\Phi_X(\phi(q))=q$ we find that 
\beqq 
 -\phi'(q)=-\frac{1}{1-\Phi_X'(\phi(q))}.
\eeqq 
 Since the composition of Pick functions is also a Pick function, and since the three functions 
\beqq
F: q\mapsto \phi(q), \;\;\; G: z\mapsto -\Phi_X'(z), \;\;\; H: w\mapsto -\frac{1}{1+w}
\eeqq 
are Pick functions, we conclude that $-\phi'(q)=H(G(F(q)))$ is also a Pick function.  Therefore, $-\Phi_Y'(q)=-\phi'(q)+1$ is a Pick function, which implies $Y \in {\mathcal T}_0$. 
\end{proof}

\section{Examples}\label{sec_examples}

In this section we present several new families of subordinators possessing explicit transition semigroups. 
Our first two examples are related to the Lambert W-function \cite{Corless96,Corless2, Pakes}, and we will start by reviewing some of its properties. 
Lambert W-function is defined as the inverse to the function $w\in \c\mapsto w{\rm e}^w$. When $z\ne 0$, the equation
$w{\rm e}^w=z$ has infinitely many solutions, therefore we will have infinitely many branches of the Lambert W-function, which we will label by $W_{k}(z)$. See \cite{Corless96} for detailed discussion of branches of Lambert W-function. We will be only interested 
in two real branches of the Lambert W-function,  $W_0(z)$ (the principal branch) and $W_{-1}(z)$. For $z>-1/e$, these are defined 
as the {\it real} solutions to $w{\rm e}^w=z$. It is easy to show that the function $w{\rm e}^w$ is increasing for $w>-1$ and decreasing for $w<-1$, see figure \ref{fig1}. Therefore, for $z\ge 0$ there is a unique real solution, corresponding to $W_0(z)$, while for $-1/e<z<0$ there exist two real solutions $W_{-1}(z)<-1<W_0(z)<0$. The graphs of the two functions $W_0(z)$ and $W_{-1}(z)$ are presented on figures \ref{p2} and \ref{p3}. The function $W_0(z)$ is the principal branch of the Lambert W-function, and it has received considerably more attention compared to its other sibling, $W_{-1}(z)$. In many ways it is a simpler function, for example it
is 
a classical example for which the Lagrange inversion formula gives a very simple and explicit Taylor series at $z=0$ (see formula (3.1) in \cite{Corless96})
,
\beq\label{W_series}
W_0(z)=\sum\limits_{n\ge 1} (-n)^{n-1} \frac{z^n}{n!}, \;\;\; \vert z \vert<1/e.
\eeq

\label{wew_discussion}

\begin{figure}
\centering
\subfloat[][The function $z=w{\rm e}^w$]{\label{p1}\includegraphics[height =6cm]{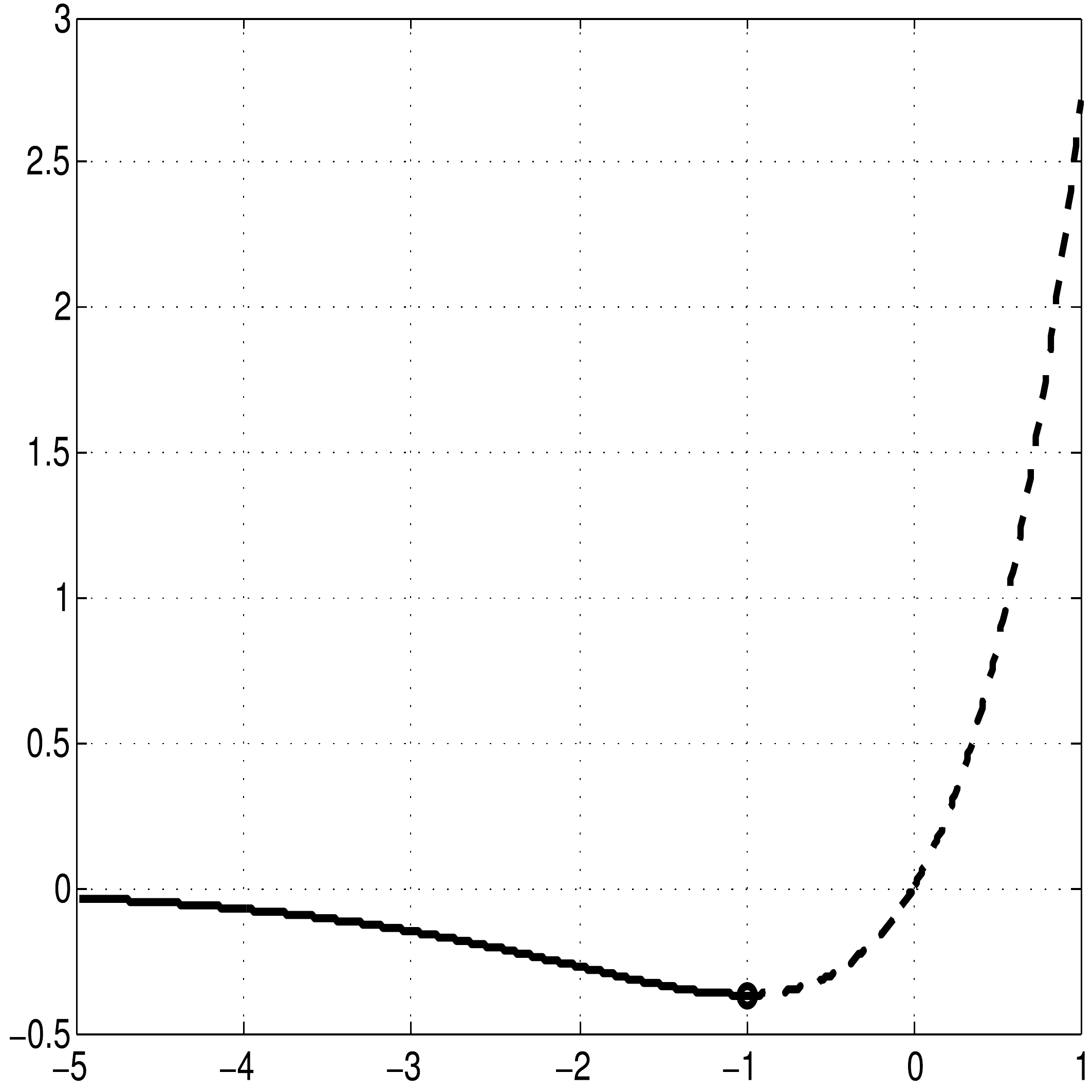}} 
\subfloat[][$W_0(z)$: the solution to $w{\rm e}^w=z$]{\label{p2}\includegraphics[height =6cm]{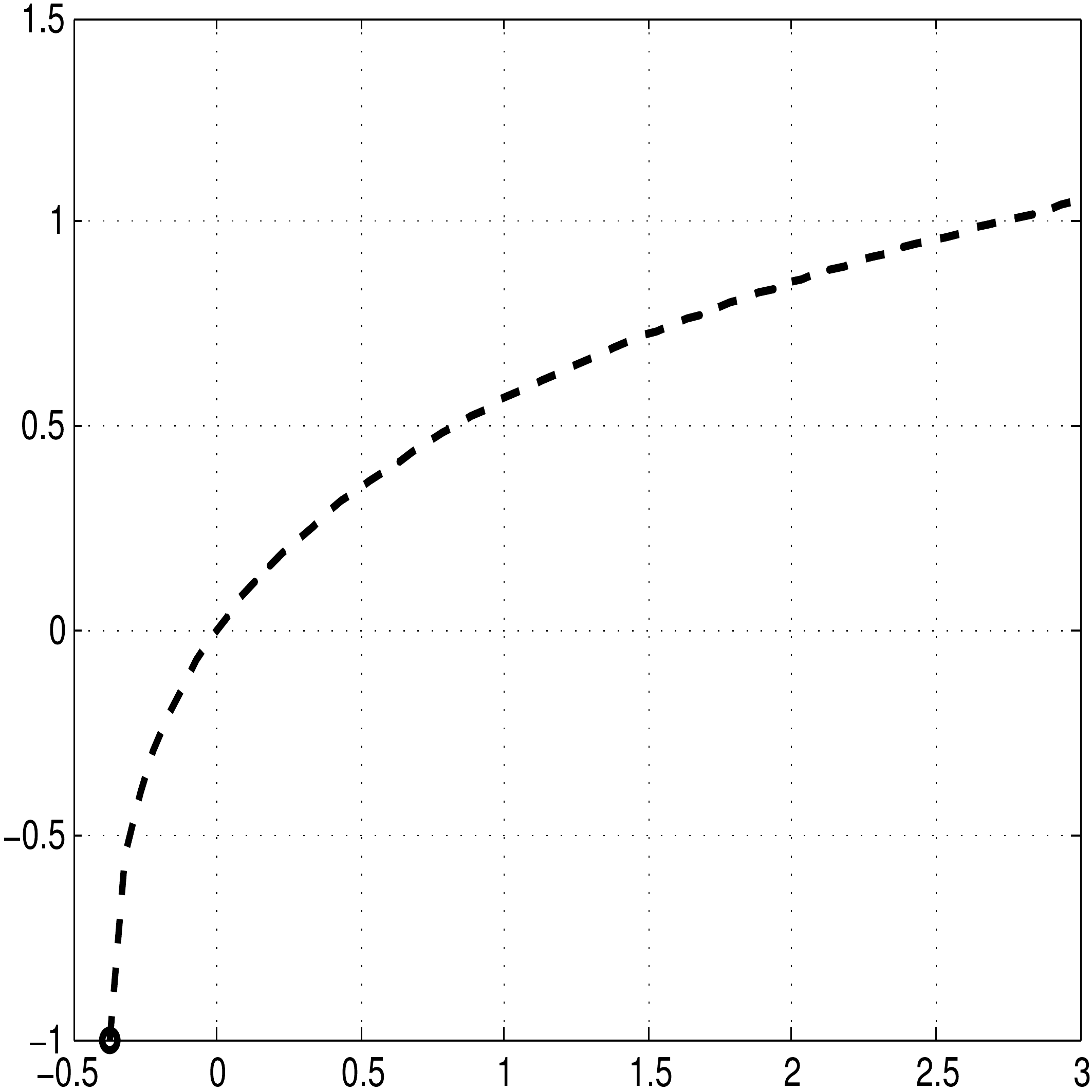}}
\subfloat[][$W_{-1}(z)$: the solution to $w{\rm e}^w=z$]{\label{p3}\includegraphics[height =6cm]{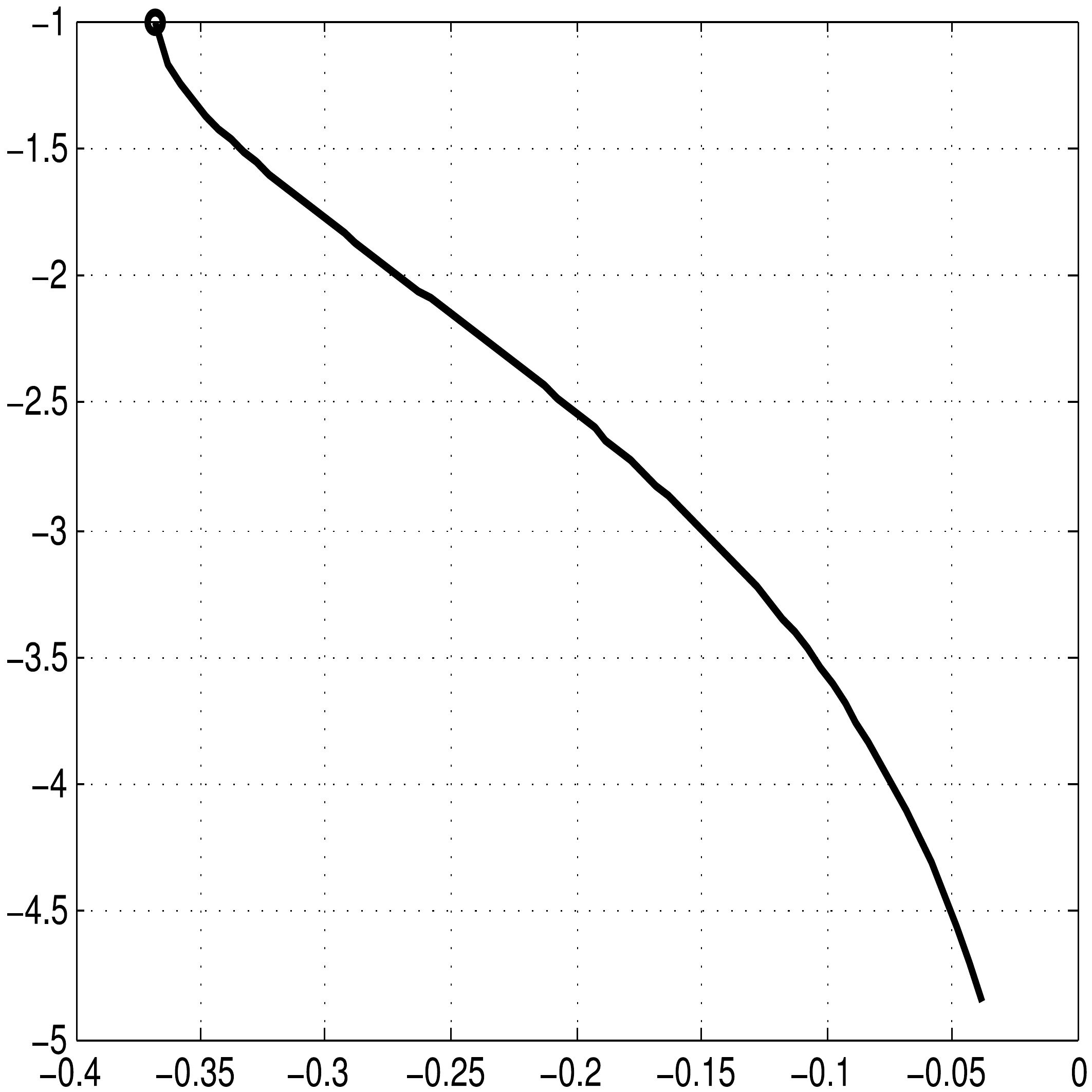}} 
\caption{The two real branches of the Lambert W-function: $W_0(z)$ is an increasing function which maps $[-1/e,\infty)$ onto $[-1,\infty)$, and
$W_{-1}(z)$ is a decreasing function which maps $[-1/e,0)$ onto $(-\infty,-1]$.} 
\label{fig1}
\end{figure}

\subsection{Poisson process}\label{subsec_Poisson}

In this section we construct a subordinator starting from the spectrally negative process $\xi_t=t-N_{ct}$, where $N$ is the standard Poisson process (i.e. with unit rate of arrival).

\begin{proposition}\label{prop_Poisson}
For $c>0$ the function $\Phi_Y(z)=W_0\left(-c{\rm e}^{-c-z}\right)-W_0\left(-c{\rm e}^{-c}\right)$ is the Laplace exponent of a compound Poisson process. The distribution of $Y_t$ is supported on $\{0,1,2,\cdots\}$ and is given by
\beq\label{gen_Poisson_distribution}
\p(Y_t=n)=ct\frac{(c(n+t))^{n-1}}{n!} {\rm e}^{-c(n+t)+at},  \;\;\; n\ge 0,
\eeq
where $a:=0$ if $c\le 1$ and $a:=c+W_0\left(-c{\rm e}^{-c} \right)$ if $c>1$.
The L\'evy measure is given by
\beq\label{Levy_measure_Poisson}
\Pi_Y(\{n\})=\frac{n^{n-1}}{n!} c^n {\rm e}^{-cn}, \;\;\; n\ge 1.
\eeq
\end{proposition}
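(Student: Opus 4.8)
The plan is to feed $X_t=N_{ct}$ into Theorem~\ref{theorem_main} and to produce the Lambert $W$-function by solving \eqref{eqn_inverse_1} explicitly. First I would record that $X_t=N_{ct}$ is a unit-jump compound Poisson subordinator, hence lies in $\mathcal N$ (zero drift, zero killing), with $\Phi_X(z)=c(1-\ee^{-z})$. Equation \eqref{eqn_inverse_1} then reads $z-c+c\ee^{-z}=q$, and the substitution $v=z-c-q$ turns it into $v\ee^{v}=-c\ee^{-c-q}$. Since $\sup_{c>0}c\ee^{-c}=1/e$, the quantity $-c\ee^{-c-q}$ lies strictly in $(-1/e,0)$ for every $q>0$, so this equation has exactly the two real solutions $W_{0}(-c\ee^{-c-q})$ and $W_{-1}(-c\ee^{-c-q})$, giving the candidates $\phi(q)=c+q+W_{k}(-c\ee^{-c-q})$ for $k\in\{0,-1\}$.

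The correct branch is $W_{0}$. One checks directly that $z=c+q+W_{0}(-c\ee^{-c-q})$ solves $z-c+c\ee^{-z}=q$ (from $v\ee^{v}=-c\ee^{-c-q}$ one finds $c\ee^{-z}=-v$) and is positive for $q>0$, so by the uniqueness built into Theorem~\ref{theorem_main} it is $\phi(q)$; the $W_{-1}$ candidate is incompatible, for instance with the property $\phi(q)/q\to1$ recorded in the proof of Theorem~\ref{theorem_main}. In particular $\phi(0)=c+W_{0}(-c\ee^{-c})$, and since $W_{0}(-c\ee^{-c})=-c$ precisely when $-c\ge-1$, i.e. $c\le1$, we get $\phi(0)=0$ for $c\le1$ and $\phi(0)>0$ for $c>1$; in both cases $\phi(0)$ equals the constant $a$ of the statement. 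Theorem~\ref{theorem_main}(i) then gives immediately that $\Phi_Y(z)=\phi(z)-\phi(0)-z=W_{0}(-c\ee^{-c-z})-W_{0}(-c\ee^{-c})$ is the Laplace exponent of a subordinator in $\mathcal N$.

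The L\'evy measure and the law of $Y_t$ follow by power-series expansion. From \eqref{W_series}, $-W_{0}(-u)=\sum_{n\ge1}\tfrac{n^{n-1}}{n!}u^{n}$ for $|u|\le1/e$; substituting $u=c\ee^{-c-z}$ (legitimate since $c\ee^{-c}\le1/e$) gives $\Phi_Y(z)=\sum_{n\ge1}\tfrac{n^{n-1}c^{n}\ee^{-cn}}{n!}(1-\ee^{-zn})$, so uniqueness of the L\'evy--Khinchine representation \eqref{Levy_Khinchine} yields \eqref{Levy_measure_Poisson}; since $\sum_{n\ge1}\Pi_Y(\{n\})=-W_{0}(-c\ee^{-c})<\infty$ and the drift is zero, $Y$ is compound Poisson. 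For the distribution of $Y_t$ I would invoke the Lagrange inversion identity $\ee^{-tW_{0}(-u)}=\sum_{n\ge0}\tfrac{t(n+t)^{n-1}}{n!}u^{n}$ (apply Lagrange--B\"urmann inversion to $w\mapsto\ee^{tw}$ along $w=u\ee^{w}$). Writing $\ee^{tW_{0}(-c\ee^{-c})}=\ee^{(a-c)t}$ and expanding, one gets
\[
\e[\ee^{-zY_t}]=\ee^{-t\Phi_Y(z)}=\ee^{(a-c)t}\ee^{-tW_{0}(-c\ee^{-c-z})}=\sum_{n\ge0}\left(ct\,\frac{(c(n+t))^{n-1}}{n!}\,\ee^{-c(n+t)+at}\right)\ee^{-zn},
\]
using $c^{n}(n+t)^{n-1}=c\,(c(n+t))^{n-1}$; since $Y_t$ is $\{0,1,2,\dots\}$-valued, matching coefficients of $\ee^{-zn}$ gives \eqref{gen_Poisson_distribution}.

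The steps I expect to be the crux are: (a) pinning down the Lambert branch, since the $W_{-1}$ candidate superficially also solves \eqref{eqn_inverse_1}; and (b) the fact that $X=N_{c\cdot}$ is \emph{not} absolutely continuous, so part (ii) of Theorem~\ref{theorem_main} cannot be quoted verbatim for $p_Y$ and $\Pi_Y$, and everything must be recovered from $\Phi_Y$ through the expansions above. The latter also demands a convergence check at $c=1$, where $c\ee^{-c}=1/e$ and the series lie exactly on the boundary of their disc of convergence --- harmless, since $z>0$ places one strictly inside, which is all one needs to identify the law. As a consistency check, \eqref{gen_Poisson_distribution} and \eqref{Levy_measure_Poisson} do agree with the formal substitution into \eqref{formula_pY_pX} and \eqref{formula_PiY} once $p_X(t,\cdot)$ is read as the Poisson mass function.
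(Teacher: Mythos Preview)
Your argument is correct, and the derivation of $\phi(q)$ and the branch selection coincide with the paper's (you use the asymptotic $\phi(q)/q\to1$ to rule out $W_{-1}$, while the paper uses monotonicity of $\phi(q)-q$; both work). The substantive difference lies in how the transition probabilities and L\'evy measure are obtained. The paper proceeds directly from Kendall's identity in its integral form \eqref{Kendalls_identity}: since $N_{c\cdot}$ is lattice-valued, the density version \eqref{Kendalls_identity_v2} is unavailable, so the authors expand both sides of \eqref{Kendalls_identity} as sums over the Poisson point masses and read off $\p(\tau_x^+=n+x)$ term by term. You instead bypass Kendall's identity entirely for this part, extracting $\Pi_Y$ from the Taylor series \eqref{W_series} of $W_0$ and then recovering $\p(Y_t=n)$ from the Lagrange--B\"urmann identity $\ee^{-tW_0(-u)}=\sum_{n\ge0}\tfrac{t(n+t)^{n-1}}{n!}u^n$. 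This is precisely the analytic route the paper alludes to in Section~\ref{sec_applications}, Example~1 (formula \eqref{W_formula2}), where it is observed that in the lattice case Kendall's identity and Lagrange inversion are two faces of the same statement. Your approach is cleaner in that it avoids the somewhat ad hoc manipulation of the double integral, at the cost of importing the Lagrange inversion identity as an external ingredient; the paper's approach keeps everything within the probabilistic framework it has set up and thereby illustrates that Kendall's identity alone suffices.
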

\begin{proof}
Consider the spectrally negative L\'evy process $\xi_t=t-N_{ct}$, where $N$ is the standard Poisson process. Our goal is to compute the Laplace exponent, transition semigroup and the L\'evy measure of the subordinator $\{\tau_x^+\}_{x\geq 0}$. On account of the fact that the paths of $\xi$ are piecewise linear, it is easy to see that $\{\tau_x^+\}_{x\geq 0}$ is necessarily  a compound Poisson process. Moreover, as noted in the proof of Theorem \ref{theorem_main},  this subordinator must also have unit drift. Its  jump size distribution must also supported on positive integers. This is intuitively clear on account of the fact that if exactly $n$ jumps occur during an excursion of $\xi$ from its maximum, then, since each jump is of unit size and $\xi$ has a unit upward drift, then it requires precisely $n$ units of time to return to the maximum. This is also clear from the analytical relation (\ref{formula_PiY}).

In order to find the Laplace exponent $\phi(q)$ we need to solve the following equation 
\beqq 
z-c(1-{\rm e}^{-z})=q.
\eeqq
Changing variables $w=z-c-q$ we rewrite the above equation as 
\beqq
{\rm e}^{w}w=-c {\rm e}^{-c-q},
\eeqq
which gives us 
\beqq
z=\phi(q)= W\left(- c {\rm e}^{-c-q} \right) + c + q,
\eeqq
where $W$ is one of the two real branches of the Lambert W-function. We need to choose the correct branch of the Lambert W-function. 
Since $\phi(q)-q-\phi(0)$  and hence $\phi(q)-q$ is  the Laplace exponent of a subordinator,  it must be increasing in $q$. Since $W_0(z)$ is increasing while $W_{-1}(z)$ is decreasing, this shows that the correct branch is $W=W_0$. Therefore we conclude 
\beq\label{phi_Poisson}
\phi(q)=W_0\left(- c {\rm e}^{-c-q} \right) + c + q, \qquad q\geq 0. 
\eeq
Note that  $\{\tau_x^+\}_{x\geq 0}$ is killed at rate $\phi(0)=W_0\left(- c {\rm e}^{-c} \right) + c$ if $c>1$, and, otherwise, at rate $\phi(0)=0$ if $c\le 1$.

Next, let us find the transition semi-group of $\{\tau_x^+\}_{x\geq 0}$. As we have discussed above,  $\{\tau_x^+\}_{x\geq 0}$ has unit drift and its jump distribution is concentrated on the positive integers. This implies that the 
distribution of $\tau_x^+$ is supported on $\{x,x+1,x+2,\cdots\}$. Let us define $p_n(x)=\p(\tau_x^+=n+x)$. Then we find, for $t,y>0$,
\beqq
\int_y^{\infty} \p(\tau_x^+ \le t) \frac{\d x}{x}=\int_y^{\infty} \sum\limits_{n\ge 0} {\bf 1}_{\{n+x\leq t\}} p_n(x) \frac{\d x}{x}=
\sum\limits_{0\le n \le t-y} \int_y^{t-n} p_n(x) \frac{\d x}{x}. 
\eeqq
At the same time, 
\beqq
\int_0^t \p(\xi_s>y) \frac{\d s}{s}&=&\int_0^t \p(N_{cs}<s-y) \frac{\d s}{s}\\
&=&\int_0^t   \sum\limits_{n\ge 0} {\bf 1}_{\{n<s-y\}} \frac{(cs)^n}{n!} {\rm e}^{-cs} \frac{\d s}{s}
\\&=&\sum\limits_{0 \le n < t-y} \int_{n+y}^t  \frac{(cs)^n}{n!}{\rm e}^{-cs} \frac{\d s}{s}\\
&=&
\sum\limits_{0 \le n < t-y} \int_{y}^{t-n}  cs\frac{(c(s+n))^{n-1}}{n!}{\rm e}^{-c(s+n)} \frac{\d s}{s}. 
\eeqq
The above two equations combined with Kendall's identity \eqref{Kendalls_identity} give us
\beq\label{p_tau_x_Poisson}
\p(\tau_x^+ = n+x)=cx \frac{(c(n+x))^{n-1}}{n!} {\rm e}^{-c(n+x)}, \;\;\; n\ge 0.
\eeq
Now we define the subordinator $Y$, with zero drift coefficient and zero killing rate, via the Laplace exponent $\Phi_Y(z)=\phi(z)-z-\phi(0)$. The formula for the transition semigroup 
\eqref{gen_Poisson_distribution} follows from \eqref{p_tau_x_Poisson}. 
\end{proof}

When $c\in (0,1)$, the distribution given in \eqref{gen_Poisson_distribution} was introduced in 1973 by Consul and Jain \cite{Consul_Jain}, 
who called it the generalized Poisson distribution (see also \cite{Pakes}). Note that this distribution changes behavior at $c=1$. Using Stirling's approximation for 
$n!$ we find that 
\beqq
\Pi_Y(\{n\})=\frac{1}{\sqrt{2\pi}} n^{-\frac{3}{2}} {\rm e}^{-(c-1-\ln(c))n} \left(1+o(1)\right), \;\;\; n\to +\infty,
\eeqq
therefore the jump distribution of $Y$ has exponential tail when $c\ne 1$ and a power-law tail (with $\e[Y_1]=+\infty$) for $c=1$. 


\subsection{Gamma process}\label{subsec_gamma}

In this section we construct a subordinator using Theorem \ref{theorem_main} by  starting from a gamma subordinator. We recall that a gamma subordinator $X$ is defined by the Laplace exponent 
$\Phi_X(z)=c \ln(1+ \theta z)$, $z\geq 0$, where the constants $c,\theta>0$. It is well-known that $X$ has zero drift and that the transition probability density and the density of the L\'evy measure are given by
\beqq
p_X(t,x)=\frac{x^{ct-1} {\rm e}^{-\frac{x}{\theta}}}{\theta^{ct}\Gamma(ct)}, \;\;\;
\pi_X(x)=\frac{c}{x} {\rm e}^{-\frac{x}{\theta}}, \qquad x, t>0.
\eeqq

\begin{proposition}\label{prop_Gamma}
The function
\beq\label{phi_gamma}
\Phi_Y(z):=- c
 W_{-1}\left( -\frac{1}{\theta c}\exp\left(-\frac{1+\theta z}{\theta c} \right)\right)
 +c W_{-1}\left( -\frac{1}{\theta c}\exp\left(-\frac{1}{\theta c} \right)\right)-z,\qquad z\geq 0,
\eeq
is the Laplace exponent of a subordinator $Y \in {\mathcal T}_0$. 
The transition probability density of $Y$ is
\beq\label{Ressel}
p_Y(t,y)=\frac{c\theta^{-1} t}{\Gamma(1+c(t+y))} \left(\frac{y}{\theta} \right)^{c(t+y)-1} {\rm e}^{-\frac{y}{\theta}+at},\qquad y,t>0,
\eeq
where $a:=0$ if $\theta c \le 1$ and $a:=-1/\theta - c W_{-1} \left( -\frac{1}{\theta c} {\rm e}^{-\frac{1}{\theta c}} \right)$ if $\theta c>1$. 
The density of the L\'evy measure is given by
\beqq
\pi_Y(y)=\frac{c\theta^{-1}}{\Gamma(1+cy)} \left(\frac{y}{\theta} \right)^{cy-1} {\rm e}^{-\frac{y}{\theta}}, \qquad y>0.
\eeqq
\end{proposition}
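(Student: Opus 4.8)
The plan is to run Theorem \ref{theorem_main} on the gamma subordinator $X$ (which lies in ${\mathcal N}$ and has absolutely continuous semigroup, as recalled above), so the first task is to solve the defining equation \eqref{eqn_inverse_1}, here $z-c\ln(1+\theta z)=q$. Substituting $u=1+\theta z$ turns this into $u-c\theta\ln u=1+\theta q$; writing $\ln u=\ln(c\theta)+\ln(u/(c\theta))$, putting $w=u/(c\theta)$, and exponentiating gives $e^{w}/w=c\theta\exp((1+\theta q)/(c\theta))$, equivalently $(-w)e^{-w}=-\tfrac{1}{\theta c}\exp\!\big(-\tfrac{1+\theta q}{\theta c}\big)$. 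Solving via the Lambert function and unwinding the substitutions yields $\phi(q)=-c\,W\!\big(-\tfrac{1}{\theta c}\exp(-\tfrac{1+\theta q}{\theta c})\big)-\tfrac1\theta$ for one of the two real branches. To pin down the branch I would argue as in the proof of Proposition \ref{prop_Poisson}: for $q\ge0$ the argument of $W$ increases strictly with $q$ and stays in $[-1/e,0)$, whereas $\phi$, being a Bernstein function, is increasing; since $W_0$ is increasing and $W_{-1}$ decreasing, only $W=W_{-1}$ is consistent. Then $\Phi_Y(z)=\phi(z)-\phi(0)-z$ is exactly \eqref{phi_gamma}, and Theorem \ref{theorem_main}(i) gives $Y\in{\mathcal N}$.

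To obtain the refinement $Y\in{\mathcal T}_0$, observe that $x\pi_X(x)=c\,e^{-x/\theta}$ is completely monotone, so $X\in{\mathcal T}_0$, and Proposition \ref{prop_Thorin_class} then gives $Y\in{\mathcal T}_0$ together with the identity $y\pi_Y(y)=p_X(y,y)$. The densities themselves I would read off from \eqref{formula_pY_pX} and \eqref{formula_PiY}. Inserting $p_X(t+y,y)=y^{c(t+y)-1}e^{-y/\theta}/(\theta^{c(t+y)}\Gamma(c(t+y)))$ into \eqref{formula_pY_pX} and using $\Gamma(c(t+y))=\Gamma(1+c(t+y))/(c(t+y))$, the prefactor $t/(t+y)$ cancels the factor $(t+y)$ coming from the $\Gamma$-recursion to leave $ct/\Gamma(1+c(t+y))$; rewriting $y^{c(t+y)-1}/\theta^{c(t+y)}=\theta^{-1}(y/\theta)^{c(t+y)-1}$ then produces \eqref{Ressel}, with $e^{\phi(0)t}$ playing the role of $e^{at}$. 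The same substitution in \eqref{formula_PiY} gives $\pi_Y(y)=y^{-1}p_X(y,y)$, which simplifies in the identical way to the stated $\pi_Y$.

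The genuinely delicate point, which I would carry out last, is matching the constant $a$ with $\phi(0)=-c\,W_{-1}(-\tfrac{1}{\theta c}e^{-1/(\theta c)})-\tfrac1\theta$ across the two regimes. Put $s=1/(\theta c)$, so the argument of $W_{-1}$ at $q=0$ is $-se^{-s}$. If $\theta c\le1$ then $s\ge1$, so $-s\le-1$ lies on the decreasing branch of $w\mapsto we^{w}$, whence $W_{-1}(-se^{-s})=-s$ and $\phi(0)=cs-\tfrac1\theta=0$, matching $a=0$; if $\theta c>1$ then $s<1$, $W_{-1}(-se^{-s})<-1$, so $\phi(0)>0$ and equals the expression defining $a$ in that case. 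I expect this branch-and-killing-rate bookkeeping — together with the incidental check that the argument of $W$ stays in $[-1/e,0)$, which holds because $se^{-s}\le e^{-1}$ — to be the main, though still elementary, obstacle; the remainder is routine Gamma-function algebra.
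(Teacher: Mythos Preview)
Your proposal is correct and follows essentially the same route as the paper: apply Theorem~\ref{theorem_main} to the gamma subordinator, reduce $z-c\ln(1+\theta z)=q$ to a Lambert-$W$ equation (your substitution $u=1+\theta z$, $w=u/(c\theta)$ and the paper's $u=-1/(\theta c)-z/c$ land on the same identity), select the branch $W_{-1}$, and then invoke Proposition~\ref{prop_Thorin_class} for the $\mathcal T_0$ claim. The only cosmetic difference is your branch-selection argument (monotonicity of the Bernstein function $\phi$) versus the paper's (unboundedness of $\phi$, forced by the unit drift), and you spell out the $\phi(0)$ case split and the $\Gamma$-algebra for $p_Y$, $\pi_Y$ more explicitly than the paper does.
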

\begin{proof}
This result is a straightforward application of Theorem \ref{theorem_main} and Proposition \ref{prop_Thorin_class}, we only need to identify the function $\phi(q)$, which is the solution 
to $z-c \ln(1+\theta z)=q$. Making change of variables
$u=-1/(\theta c) -z/c$ we can rewrite this equation as
\beqq
u{\rm e}^u = -\frac{1}{\theta c} {\rm e}^{-\frac{1}{\theta c} - \frac{q}{c}},
\eeqq
therefore
\beqq
u=W\left( -\frac{1}{\theta c} {\rm e}^{-\frac{1}{\theta c} - \frac{q}{c}} \right),
\eeqq
where $W$ is one of the two real branches of the Lambert W-function. Again, we need to choose the correct branch, $W_0$ or $W_{-1}$. Let us consider
\beqq
\phi(q)=-\frac{1}{\theta}-cu=-\frac{1}{\theta} - c W \left( -\frac{1}{\theta c} {\rm e}^{-\frac{1}{\theta c} - \frac{q}{c}} \right).
\eeqq
We know that $\phi(q)$ is the Laplace exponent of a subordinator with  drift rate equal to one, therefore  $\phi(q)$ is unbounded on $q\in (0,\infty)$. From the properties of $W_0$ and $W_{-1}$ (see 
figure \ref{fig1}) this is only possible if we choose the branch $W=W_{-1}$. Thus we obtain
\beqq
\phi(q)=-\frac{1}{\theta}-cu=-\frac{1}{\theta} - c W_{-1} \left( -\frac{1}{\theta c} {\rm e}^{-\frac{1}{\theta c} - \frac{q}{c}} \right).
\eeqq
Note that $\phi(0)=0$ if and only if $\theta c\le 1$. The rest of the proof follows from Theorem \ref{theorem_main}
and from Proposition \ref{prop_Thorin_class}.  
\end{proof}

The distribution given in \eqref{Ressel} goes back to Kendall \cite{Kendall1957}. It is also known as Ressel (or Kendall-Ressel) distribution (see \cite{Letac1990,Vinogradov2011}).  
Using Stirling's approximation for the Gamma function we find that 
\beqq
\pi_Y(y)=\sqrt{\frac{c}{2\pi}} y^{-\frac{3}{2}} {\rm e}^{-(\ln(\theta c)-1+\frac{1}{\theta c})cy} \left(1+o(1)\right), \;\;\; y\to +\infty,
\eeqq
therefore the L\'evy density of $Y$ has exponential tail when $\theta c\ne 1$ and a power-law tail (with $\e[Y_1]=+\infty$) for $\theta c=1$.

\subsection{Stable processes}\label{subsec_stable}

In this section, we obtain new families of subordinators which are related to stable processes. We define
\beq\label{p_series_alpha_01}
g(x;\alpha):=\frac{1}{\pi}\sum\limits_{n\ge 1} (-1)^{n-1} \frac{\Gamma(1+\alpha n)}{n!} \sin(\pi n \alpha) 
x^{-n \alpha -1}, \;\;\; x>0, \; 0<\alpha<1,
\eeq
and 
\beq\label{p_series_alpha_12}
g(x;\alpha):=\frac{1}{\pi}\sum\limits_{n\ge 1} (-1)^{n-1} \frac{\Gamma\left(1+n/\alpha\right)}{n!} \sin\left(\tfrac{\pi n}{\alpha}\right) 
x^{n-1}, \;\;\; x\in \r, \; \alpha>1. 
\eeq 
Note that, for $\alpha>1$, the function $x\mapsto g(x;\alpha)$ is entire and  satisfies the identity
\beq\label{Zolotarev_duality}
 x g(x;\alpha)=x^{-\alpha} g(x^{-\alpha};\alpha^{-1}), \;\;\; x>0, \alpha>1. 
\eeq
The function $g(x;\alpha)$ has the following probalistic interpretation: for $\alpha \in (0,1)$ \{resp. $\alpha \in (1,2)$\} it is the probability density function of a strictly stable random variable $U$ defined by $\e[\exp(-z U)]=\exp(-z^{\alpha})$
\{resp. $\e[\exp(z U)]=\exp(z^{\alpha})$\}, see Theorem 2.4.2 in \cite{Zolotarev1986}.  Identity \eqref{Zolotarev_duality} is just a special case of the so-called Zolotarev duality, see Theorem 2.3.2 in \cite{Zolotarev1986}. It is known that $U$ has a GGC distribution, see
example 3.2.1 in \cite{Bondesson}.

When $\alpha$ is a rational number, the function $g(x;\alpha)$ can be given in terms of hypergeometric functions, for example: 
\beqq
g(x;\tfrac{1}{3})=\frac{x^{-\frac{3}{2}}}{3\pi} K_{\frac{1}{3}}\left( \frac{2}{3\sqrt{3x}}\right), \;\;\;
g(x;\tfrac{2}{3})=\sqrt{\frac{3}{\pi}} x^{-1}{\rm e}^{-\frac{2}{27x^2}}W_{\frac{1}{2},\frac{1}{6}}\left(\frac{4}{27x^2}\right),\qquad x>0,
\eeqq
where $K_{\nu}(x)$ denotes the modified Bessel function of the  second type and $W_{a,b}(x)$ denotes the
Whittaker function (see \cite{Jeffrey2007}). The above two formulas can be found in \cite{Zolotarev1986} (see formula 2.8.31 and formula 2.8.33 with a slight normalizing correction $1/\sqrt{3\pi} \mapsto \sqrt{3/\pi}$).

\begin{proposition}\label{prop_stable1}
Assume that $\alpha\in (0,1)$ and $c>0$. For $q\ge 0$ define $\phi(q)$, $q\geq 0,$ as the unique positive solution to the equation $z-cz^{\alpha}=q$. Then 
the function $\Phi_Y(z)=\phi(z)-c^{\frac{1}{1-\alpha}}-z$ is the Laplace exponent of a subordinator $Y\in {\mathcal T}_0$.  
The transition probability density of the subordinator $Y$  is given by
\beq\label{p_y_stable1}
p_Y(t,y)= t\exp\left(c^{\frac{1}{1-\alpha}}t\right) \frac{(c(t+y))^{-\frac{1}{\alpha}}}{t+y} g\left(y(c(t+y))^{-\frac{1}{\alpha}};\alpha\right)\qquad x,t>0.
\eeq
The density of the L\'evy measure is given by
\beq\label{pi_y_stable1}
\pi_Y(y)= c^{-\frac{1}{\alpha}}y^{-\frac{1}{\alpha}-1} g\left(c^{-\frac{1}{\alpha}}y^{1-\frac{1}{\alpha}};\alpha\right),\qquad y>0.
\eeq
\end{proposition}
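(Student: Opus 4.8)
The plan is to recognise the input process as the $\alpha$-stable subordinator and then feed it through the machinery already assembled. Concretely, $\phi$ here is the (increasing branch of the) inverse of $\psi(z)=z-\Phi_X(z)$ with $\Phi_X(z)=cz^{\alpha}$, i.e. $X$ is the subordinator with $\e[\exp(-zX_t)]=\exp(-ctz^{\alpha})$. This $X$ has zero drift, and since its L\'evy density is a constant multiple of $x^{-\alpha-1}$, the function $x\pi_X(x)$ is a constant multiple of $x^{-\alpha}=\Gamma(\alpha)^{-1}\int_0^{\infty}{\rm e}^{-xy}y^{\alpha-1}\,\d y$, hence completely monotone; equivalently, $X_1$ has a GGC law (example 3.2.1 in \cite{Bondesson}). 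Thus $X\in\mathcal{T}_0$, and Theorem \ref{theorem_main} together with Proposition \ref{prop_Thorin_class} immediately give that $\Phi_Y(z)=\phi(z)-\phi(0)-z$ is the Laplace exponent of some $Y\in\mathcal{T}_0$. It then remains to (a) identify the constant $\phi(0)$, and (b) insert the known density of $X_t$ into \eqref{formula_pY_pX} and \eqref{formula_PiY}.

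For step (a) I would analyse $\psi(z)=z-cz^{\alpha}$ on $(0,\infty)$: it is convex, since $\psi''(z)=c\alpha(1-\alpha)z^{\alpha-2}>0$, it vanishes at $z=0$, it decreases to a strictly negative minimum at $z_{\ast}=(c\alpha)^{1/(1-\alpha)}$, and then increases to $+\infty$. Consequently, for $q>0$ the equation $\psi(z)=q$ has a unique positive root, which lies on the increasing branch $z>z_{\ast}$ (this is the first-passage Laplace exponent, forced to be the increasing branch by $\phi(\infty)=\infty$ and unit drift, exactly as in the Poisson and gamma cases), and letting $q\downarrow 0$ this root converges to the unique strictly positive zero of $\psi$, namely the solution of $z=cz^{\alpha}$, which is $z=c^{1/(1-\alpha)}$. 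Hence $\phi(0):=\phi(0+)=c^{1/(1-\alpha)}$, the constant appearing in $\Phi_Y(z)=\phi(z)-c^{1/(1-\alpha)}-z$. Note that no closed form for $\phi$ itself is needed or claimed here, in contrast with the Lambert-$W$ examples.

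For step (b) I would invoke the scaling property of stable subordinators: from $\e[\exp(-zX_t)]=\exp(-(ct)z^{\alpha})$ we get $X_t\stackrel{d}{=}(ct)^{1/\alpha}U$, with $U$ the variable of \eqref{p_series_alpha_01}, so that $p_X(t,x)=(ct)^{-1/\alpha}\,g\!\left(x(ct)^{-1/\alpha};\alpha\right)$ for $x,t>0$. Evaluating this at time $t+y$ and argument $y$ and substituting into \eqref{formula_pY_pX}, together with $\phi(0)=c^{1/(1-\alpha)}$, yields \eqref{p_y_stable1} directly (using $\tfrac{t}{t+y}(c(t+y))^{-1/\alpha}=t\tfrac{(c(t+y))^{-1/\alpha}}{t+y}$); evaluating at $x=t=y$ and substituting into \eqref{formula_PiY}, then simplifying $y\cdot(cy)^{-1/\alpha}=c^{-1/\alpha}y^{1-1/\alpha}$ and $(cy)^{-1/\alpha}=c^{-1/\alpha}y^{-1/\alpha}$, gives \eqref{pi_y_stable1}.

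I do not expect a serious obstacle here. The only genuinely delicate point is the branch selection in step (a): one must be sure that $\phi$ is the increasing branch of $\psi^{-1}$, and hence that $\phi(0+)$ equals the nonzero root $c^{1/(1-\alpha)}$ rather than $0$. Everything else is bookkeeping: plugging the explicit stable density into the two formulas of Theorem \ref{theorem_main}, plus citing \cite{Zolotarev1986} for the identification of $g(\cdot;\alpha)$ with the stable density and \cite{Bondesson} for $X\in\mathcal{T}_0$.
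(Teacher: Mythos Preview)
Your proposal is correct and follows essentially the same approach as the paper: identify $X$ as the $\alpha$-stable subordinator with $\Phi_X(z)=cz^{\alpha}$, invoke the scaling property to write $p_X(t,x)=(ct)^{-1/\alpha}g(x(ct)^{-1/\alpha};\alpha)$, and then apply Theorem~\ref{theorem_main} and Proposition~\ref{prop_Thorin_class} together with $\phi(0)=c^{1/(1-\alpha)}$. The paper's proof is terser and does not spell out the convexity/branch argument for $\phi(0)$ or the explicit verification that $X\in\mathcal{T}_0$, but your additional detail is entirely consistent with what the paper leaves implicit.
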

\begin{proof}
Let $X$ be an $\alpha$-stable subordinator, having Laplace exponent $\Phi_X(z)=cz^{\alpha}$.  
Due to the scaling property $a^{-\frac{1}{\alpha}}X_{at}\stackrel{d}{=} X_t$ we find that the density of
$X_t$ is given by $p_X(t,x)=g(x(ct)^{-\frac{1}{\alpha}};\alpha) (ct)^{-\frac{1}{\alpha}}$. 
The rest of the proof is a straightforward application of Theorem \ref{theorem_main}, Proposition \ref{prop_Thorin_class} and the fact that $\phi(0)=c^{\frac{1}{1-\alpha}}$. 
\end{proof}

\begin{remark}\label{mean}
We can also  compute the mean of the subordinator $Y$, but without having to consider the tail of the measure $\pi_Y$ as in the previous examples. Recall that $\phi(q)$ satisfies
$
\psi_\xi(\phi(q))= q
$, for $q\geq0$. Differentiating, it follows that, for $q>0$, $\phi'(q) \psi_\xi'(\phi(q))= 1$ and hence, 
\[
\mathbb{E}[Y_1] = \lim_{q\to0}\phi'(q) -1= \frac{1}{\psi'_\xi(\phi(0))}-1.
\]
It follows that the subordinator $Y$ has infinite mean if and only if $\psi'(\phi(0))=0$.
This happens if and only if  $\phi(0) = 0$ and $\psi'(0+) = 0$. When that $\psi_\xi(z) = z  -\Phi_X(z)$, $Y$ has infinite mean if and only if $\phi(0)=0$ and $\Phi_X'(0) =1$.
One easily shows in this example that 
\[
\mathbb{E}[Y_1]= \frac{1}{1 - c\alpha (c^{\frac{1}{1-\alpha}})^{\alpha -1}} - 1
=\frac{\alpha}{1-\alpha}.
\]
\end{remark}

In the next proposition, we use Theorem \ref{theorem_main} in combination with a choice of $\xi$ which is not the difference of a unit drift and a subordinator (and therefore a process of bounded variation). Instead we choose $\xi$ directly to be a spectrally negative stable process with unbounded variation added to a unit positive drift.

\begin{proposition}\label{prop_stable2}
Assume that $\alpha\in (1,2)$ and $c>0$. For $q\ge 0$ define $\Phi_Y(q)$ as the unique positive solution to the equation 
$z+cz^{\alpha}=q$. Then $\Phi_Y(q)$ is the Laplace exponent of an infinite mean subordinator $Y\in {\mathcal T}_0$. 
The transition probability density of the subordinator $Y$  is given by
\beq\label{p_y_stable2}
p_{Y}(t,y)= c^{-\frac{1}{\alpha}}t y^{-\frac{1}{\alpha}-1}  g\left((t-y) (cy)^{-\frac{1}{\alpha}};\alpha\right)\qquad y,t>0.
\eeq
The density of the L\'evy measure is given by
\beq\label{pi_y_stable2}
\pi_Y(y)= c^{-\frac{1}{\alpha}} y^{-\frac{1}{\alpha}-1}  g\left(-c^{-\frac{1}{\alpha}} y^{1-\frac{1}{\alpha}} ;\alpha\right),\qquad y>0.
\eeq
\end{proposition}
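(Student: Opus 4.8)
The plan is to realise $\xi$ explicitly: take $\xi_t = t + X_t$, where $X$ is the spectrally negative strictly $\alpha$-stable L\'evy process, $\alpha\in(1,2)$, normalised by $\ln\e[{\rm e}^{zX_1}] = cz^\alpha$, $z\ge0$. Then $\xi$ is spectrally negative of unbounded variation with unit positive drift, and $\psi_\xi(z)=z+cz^\alpha$ is exactly the left-hand side of the defining equation. The preliminary observations are routine: $\psi_\xi$ is a strictly increasing convex bijection of $[0,\infty)$, so $\phi:=\psi_\xi^{-1}$ is well defined; $\psi_\xi'(0+)=1>0$ forces $\phi(0)=0$ (equivalently $\e[\xi_1]>0$, so $\xi$ drifts to $+\infty$ and every $\tau_x^+$ is a.s.\ finite); and since $\psi_\xi(z)\sim cz^\alpha$ with $\alpha>1$ we have $\phi(q)/q\to0$ as $q\to\infty$, so the first passage subordinator has zero drift. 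Hence $\Phi_Y:=\phi$ is already the Laplace exponent of a subordinator $Y\in{\mathcal N}$ with $Y_x\stackrel{d}{=}\tau_x^+$; unlike in Proposition \ref{prop_stable1}, nothing has to be subtracted, because the unbounded variation of $\xi$ makes the drift of $\phi$ vanish on its own.

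With $\xi$ in hand, the densities come out of Kendall's identity. By self-similarity $X_t\stackrel{d}{=}(ct)^{1/\alpha}U$, where $U$ has density $g(\,\cdot\,;\alpha)$ (Theorem 2.4.2 in \cite{Zolotarev1986}), so $\xi_t$ has the absolutely continuous density $x\mapsto(ct)^{-1/\alpha}g\big((x-t)(ct)^{-1/\alpha};\alpha\big)$. Substituting into \eqref{Kendalls_identity_v2} and writing $Y_t=\tau_t^+$ gives \eqref{p_y_stable2}; the argument of $g$ there ranges over all of $\r$, which is harmless since $g(\,\cdot\,;\alpha)$ is entire for $\alpha>1$. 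Then \eqref{pi_y_stable2} follows from \eqref{Levy_measure_as_a_limit}, as $\pi_Y(y)=\lim_{t\to0^+}t^{-1}p_Y(t,y)=y^{-1}p_\xi(y,0^+)$, the limit being justified by continuity of $g(\,\cdot\,;\alpha)$; in particular $y\pi_Y(y)=p_\xi(y,0^+)$. The moment assertion is obtained by differentiating the defining relation $\phi(q)+c\phi(q)^\alpha=q$ near $q=0$, exactly as in Remark \ref{mean}.

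The one genuinely delicate point is membership in ${\mathcal T}_0$: Proposition \ref{prop_Thorin_class} is stated for subordinators built from some $X\in{\mathcal T}_0$ via $\xi_t=t-X_t$, whereas our $\xi$ is a stable process of unbounded variation plus a drift and is not of that form, so one must reproduce its proof by hand. The L\'evy density of $\xi$ is $\pi_\xi(x)=\mathrm{const}\cdot|x|^{-1-\alpha}$ on $(-\infty,0)$, so $x\mapsto\pi_\xi(-x)$ is completely monotone; Proposition \ref{thm_compl_monotone} then gives that $\pi_Y$ is completely monotone, i.e.\ $\Phi_Y=\phi$ is a complete Bernstein function, in particular a Pick function. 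Differentiating $\phi(q)+c\phi(q)^\alpha=q$ gives
\[
-\Phi_Y'(q)=-\phi'(q)=H\big(G(F(q))\big),\qquad F(q)=\phi(q),\quad G(z)=c\alpha z^{\alpha-1},\quad H(w)=-\tfrac{1}{1+w},
\]
and since $\alpha-1\in(0,1)$, $G$ is a Pick function, $H$ is a Pick function, and $F=\phi$ is a Pick function by the previous sentence, so the composition $-\Phi_Y'$ is a Pick function. By Theorem 3.1.2 in \cite{Bondesson} this is equivalent to $Y_1$ having a GGC distribution, i.e.\ $Y\in{\mathcal T}_0$, with $y\pi_Y(y)=p_\xi(y,0^+)$ completely monotone. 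I expect this last step --- recognising that Proposition \ref{prop_Thorin_class} does not apply verbatim and rerunning the Pick-function composition, which itself relies on first upgrading $\phi$ to a complete Bernstein function through complete monotonicity of $x\mapsto x^{-1-\alpha}$ --- to be the main obstacle; the rest is a direct specialisation of Theorem \ref{theorem_main} and Kendall's identity to the stable case.
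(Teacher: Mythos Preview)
Your argument is essentially identical to the paper's: realise $\xi$ as a spectrally negative $\alpha$-stable process plus unit drift, read off $p_Y$ and $\pi_Y$ from Kendall's identity \eqref{Kendalls_identity_v2} and the limit \eqref{Levy_measure_as_a_limit}, and---since Proposition~\ref{prop_Thorin_class} does not apply verbatim---establish $Y\in\mathcal T_0$ by first using Proposition~\ref{thm_compl_monotone} (complete monotonicity of $x\mapsto |x|^{-1-\alpha}$) to make $\Phi_Y$ a Pick function, then composing Pick functions to show $-\Phi_Y'$ is Pick. The paper does exactly this, with only the cosmetic difference that it absorbs the constant $c\alpha$ into $H$ rather than into $G$.

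One caveat, which affects both your write-up and the paper's: the step you describe for the ``moment assertion'' does not actually yield an infinite mean. Differentiating $\phi(q)+c\phi(q)^\alpha=q$ gives $\phi'(q)=\bigl(1+c\alpha\,\phi(q)^{\alpha-1}\bigr)^{-1}$, and since $\alpha-1>0$ and $\phi(0)=0$ one gets $\e[Y_1]=\Phi_Y'(0^+)=\phi'(0^+)=1$; equivalently, in the language of Remark~\ref{mean}, $\psi'_\xi(0^+)=1\neq0$ (and note that here $\Phi_Y=\phi$, with no ``$-z$'' subtracted, because the drift of $\tau^+$ is already zero). So carrying out the differentiation you invoke would contradict, not establish, the infinite-mean claim in the statement; this appears to be a slip in the proposition itself rather than a gap in your method.
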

\begin{proof}
Let $\tilde \xi$ be a spectrally negative $\alpha$-stable process, defined by the Laplace exponent $\e[\exp(z \tilde \xi_1)]=\exp(cz^{\alpha})$, $z\geq 0$.  
Consider the spectrally negative process $\xi_t=\tilde\xi_t+t$.
The density of $\xi_t$ is 
\beqq
p_{\xi}(t,x)=(ct)^{-\frac{1}{\alpha}} g((x-t) (ct)^{-\frac{1}{\alpha}};\alpha), \qquad x\in\mathbb{R}, t>0.
\eeqq 
We define the subordinator $Y_t=\tau_t^+$, $t\geq 0$. 
Formula \eqref{p_y_stable2} follows from Kendall's identity \eqref{Kendalls_identity_v2} and formula \eqref{pi_y_stable2} follows from 
\eqref{formula_PiY}. Referring to the computations in Remark \ref{mean}, it is straightforward to see that $\mathbb{E}[Y_1]  =+\infty.$
 Let us prove that $Y \in {\mathcal T}_0$. The proof will follow the same path as the proof of Proposition
\ref{prop_Thorin_class}. Taking derivatives with respect to $q$  on both sides of the identity
\beqq
\Phi_Y(q)+c\Phi_Y(q)^{\alpha}=q
\eeqq
we find that
\beqq
-\Phi_Y'(q)=-\frac{1}{1+\alpha c \Phi_Y(q)^{\alpha-1}}.
\eeqq
According to Proposition \ref{thm_compl_monotone}, the function $\Phi_Y(q)$ is a Pick function, therefore $-\Phi_Y'(q)=H(G(F(q)))$ is a composition of the three Pick functions
\beqq
F: q \mapsto \Phi_Y(q), \;\;\; G: z \mapsto z^{\alpha-1}, \;\;\; H: w \mapsto -\frac{1}{1+\alpha c w}.
\eeqq
This shows that $-\Phi_Y'(q)$ is a Pick function, therefore $Y \in {\mathcal T}_0$. 
\end{proof}

\begin{remark}
The proof of Proposition \ref{prop_stable2} shows  that the subordinator $Y$ is the ascending ladder time subordinator
of an unbounded variation spectrally negative stable process with unit {\it positive} drift.
One could ask the following natural question: what if we consider the ascending ladder time subordinator of an unbounded variation spectrally negative stable process with unit {\it negative} drift, will we get a new family of subordinators? It turns out that in this case we would obtain (up to scaling) the same family of subordinators as in Proposition \ref{prop_stable1}. The details are left to the reader. The case that we choose $\xi$ to be just an unbounded variation spectrally negative stable process is uninteresting. In that case Theorem \ref{theorem_main} simply delivers the classical result that $Y$ is the ascending ladder time process which is a stable subordinator with index $1/\alpha$.
\end{remark}

\subsection{Bessel subordinator}\label{subsec_Bessel}

A Bessel subordinator $X$ is defined by the Laplace exponent
\beq\label{def_Bessel_Phi}
\Phi_X(z)=c \ln\left(1+\theta z+\sqrt{(1+\theta z)^2 -1} \right), \qquad z\geq 0,
\eeq
where $c>0$ and $\theta>0$. 
This process was introduced in \cite{MNY}, and it was shown that its transition density and the density of the L\'evy measure  are respectively  given by
\beqq
p_X(t,x)=c t x^{-1} {\rm e}^{-\frac{x}{\theta}} I_{ct}\left(\tfrac{x}{\theta}\right),
\;\;\; \pi_X(x)=c x^{-1} {\rm e}^{-\frac{x}{\theta}} I_{0}\left(\tfrac{x}{\theta}\right),\qquad t,x>0,
\eeqq
where $I_{\nu}(x)$ denotes the modified Bessel function of the first kind (see \cite{Jeffrey2007}). It is known 
that $X\in {\mathcal T}_0$, see example 1.6.b in \cite{james2008}. 
Applying Theorem \ref{theorem_main} and Proposition \ref{prop_Thorin_class}, as well as taking note of Remark \ref{mean}, we obtain the following result. 
\begin{proposition}\label{prop_Bessel}
For $q>0$ define $\phi(q)$ as the unique solution to the equation 
\beqq
z-c \ln\left(1+\theta z +\sqrt{(1+\theta z)^2 -1} \right)=q.
\eeqq
Then the function $\Phi_Y(z)=\phi(z)-\phi(0)-z$ is the Laplace exponent of a finite mean subordinator $Y \in {\mathcal T}_0$.  
The transition probability density of the subordinator $Y$  is given by
\beqq
p_Y(t,y)= c t y^{-1} {\rm e}^{\phi(0)t-\frac{y}{\theta}} I_{c(t+y)}\left(\frac{y}{\theta}\right).
\eeqq
The density of the L\'evy measure is given by
\beqq
\pi_Y(y)= c y^{-1} {\rm e}^{-\frac{y}{\theta}} I_{cy}\left(\frac{y}{\theta}\right).
\eeqq
\end{proposition}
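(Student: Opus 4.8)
The plan is to verify that the Bessel subordinator $X$ defined by \eqref{def_Bessel_Phi} satisfies the hypotheses of Theorem \ref{theorem_main} and Proposition \ref{prop_Thorin_class}, and then simply read off the three formulas. First I would check that $X \in {\mathcal N}$: the Laplace exponent $\Phi_X(z) = c\ln\bigl(1 + \theta z + \sqrt{(1+\theta z)^2 - 1}\bigr)$ is indeed a Bernstein function with zero killing rate (since $\Phi_X(0) = c \ln 1 = 0$) and zero drift (since $\Phi_X(z)/z \to 0$ as $z \to \infty$, because $\Phi_X$ grows only logarithmically). Moreover $X$ has an absolutely continuous transition semigroup, with density $p_X(t,x) = c t x^{-1} {\rm e}^{-x/\theta} I_{ct}(x/\theta)$ as recalled from \cite{MNY}. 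Hence Theorem \ref{theorem_main}(i) applies and $\Phi_Y(z) = \phi(z) - \phi(0) - z$ is the Laplace exponent of a subordinator $Y \in {\mathcal N}$, where $\phi(q)$ solves $z - \Phi_X(z) = q$.

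Next I would apply the formulas \eqref{formula_pY_pX} and \eqref{formula_PiY} of Theorem \ref{theorem_main} directly. Substituting $p_X(t+y, y) = c(t+y) y^{-1} {\rm e}^{-y/\theta} I_{c(t+y)}(y/\theta)$ into \eqref{formula_pY_pX} gives
\[
p_Y(t,y) = \frac{t}{t+y} {\rm e}^{\phi(0)t}\, c(t+y)\, y^{-1} {\rm e}^{-y/\theta} I_{c(t+y)}\bigl(\tfrac{y}{\theta}\bigr) = c t y^{-1} {\rm e}^{\phi(0)t - y/\theta} I_{c(t+y)}\bigl(\tfrac{y}{\theta}\bigr),
\]
and substituting $p_X(y,y) = cy \cdot y^{-1} {\rm e}^{-y/\theta} I_{cy}(y/\theta) = c\, {\rm e}^{-y/\theta} I_{cy}(y/\theta)$ into \eqref{formula_PiY} gives $\Pi_Y(\d y) = y^{-1} c\, {\rm e}^{-y/\theta} I_{cy}(y/\theta)\, \d y$, which is exactly $\pi_Y(y)\,\d y$ as claimed. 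This part is purely mechanical.

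For the Thorin-class assertion I would invoke Proposition \ref{prop_Thorin_class}: since it is known (example 1.6.b in \cite{james2008}) that $X \in {\mathcal T}_0$, the proposition immediately yields $Y \in {\mathcal T}_0$, and in particular the completely monotone character of $y\pi_Y(y) = p_X(y,y) = c\,{\rm e}^{-y/\theta} I_{cy}(y/\theta)$. Finally, the finite-mean claim follows from the computation in Remark \ref{mean}: $\mathbb{E}[Y_1] = 1/\psi'_\xi(\phi(0)) - 1$ with $\psi_\xi(z) = z - \Phi_X(z)$, so the mean is finite provided $\psi'_\xi(\phi(0)) \neq 0$; since $\Phi_X'(0) = c\theta \neq 1$ in general one has to note that even when $\phi(0) = 0$ and $\Phi_X'(0) = 1$ the relevant derivative does not vanish --- in fact $\Phi_X'(0) = c\theta$, and whether $\phi(0) = 0$ depends on whether $c\theta \lessgtr 1$, but in either case $\psi_\xi'(\phi(0)) > 0$, so $Y$ always has finite mean.

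The only genuinely non-routine point is the last one: one should check carefully that $\psi_\xi'(\phi(0)) \neq 0$ in all parameter regimes, i.e.\ that the borderline case $\phi(0) = 0$ together with $\Phi_X'(0) = 1$ (which would force infinite mean) never occurs for the Bessel Laplace exponent. Since $\Phi_X'(0) = c\theta$, the case $\Phi_X'(0) = 1$ is precisely $c\theta = 1$; but when $c\theta = 1$ one has $\Phi_X'(0) = 1 = $ the drift of $\xi$, so $\psi_\xi'(0) = 1 - \Phi_X'(0) = 0$ and one would conclude infinite mean --- contradicting the statement. Thus the main obstacle is to resolve this apparent tension, presumably by a more careful analysis showing $\phi(0) > 0$ whenever $c\theta \le 1$ fails to force $\psi_\xi'(\phi(0)) = 0$, or by checking that $\Phi_X'(0) < 1$ strictly for the Bessel process; I would resolve it by computing $\Phi_X'(0)$ explicitly from \eqref{def_Bessel_Phi} and comparing with the drift, and then tracing through Remark \ref{mean}. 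Everything else is substitution.
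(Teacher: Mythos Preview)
Your approach is exactly the paper's: apply Theorem \ref{theorem_main} for the Laplace exponent, transition density and L\'evy density, invoke Proposition \ref{prop_Thorin_class} (via the fact from \cite{james2008} that the Bessel subordinator is in ${\mathcal T}_0$), and use Remark \ref{mean} for the finite-mean claim. The substitutions you carry out for $p_Y$ and $\pi_Y$ are correct.

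The ``tension'' you flag at the end is not real; it comes from a miscomputation. You write $\Phi_X'(0)=c\theta$, but differentiating \eqref{def_Bessel_Phi} gives
\[
\Phi_X'(z)=\frac{c\theta}{\sqrt{(1+\theta z)^2-1}},
\]
so $\Phi_X'(0)=+\infty$ (equivalently, $\int_0^\infty x\,\pi_X(x)\,\d x=c\int_0^\infty {\rm e}^{-x/\theta}I_0(x/\theta)\,\d x=+\infty$ since $I_0(u)\sim {\rm e}^u/\sqrt{2\pi u}$). Hence $\psi_\xi'(0)=1-\Phi_X'(0)=-\infty$, so $\e[\xi_1]=-\infty$ and $\phi(0)>0$ for \emph{every} choice of $c,\theta>0$. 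By strict convexity of $\psi_\xi$, the positive zero $\phi(0)$ satisfies $\psi_\xi'(\phi(0))>0$, and Remark \ref{mean} then gives $\e[Y_1]<\infty$ with no case analysis needed. Once you correct $\Phi_X'(0)$, the obstacle disappears and the proof is complete.
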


\subsection{Geometric stable subordinator}\label{subsec_geom_stable}

Assume that $c>0$, $\theta>0$ and $\alpha \in (0,1)$. 
Consider a geometric stable subordinator $X$, which is defined by the Laplace exponent $\Phi_X(z)=c\ln(1+(\theta z)^{\alpha})$ (see \cite{Song_Vondracek} and \cite{Pillai}). This process can be constructed by taking an $\alpha$-stable subordinator and subordinating it with 
the Gamma process. The transition density and L\'evy density of $X$ are respectively given by
\beqq
p_X(t,x)=\frac{\alpha c t}{x} \sum\limits_{k\ge 0}  \frac{(-1)^k(1+ct)_k}{\Gamma(1+\alpha(ct+k)) k! }  \left(\frac{x}{\theta}\right)^{\alpha(ct+k)},
\;\;\; 
\pi_X(x)=
c\alpha x^{-1} E_{\alpha}\left(-\left(\tfrac{x}{\theta}\right)^{\alpha}\right),\qquad  t,x>0,
\eeqq
where $(a)_k:=a(a+1)\cdots (a+k-1)$ denotes the Pocchammer symbol  and
\beqq
E_{\alpha}(x):=\sum\limits_{k\ge 0}  \frac{x^k}{\Gamma(1+\alpha k)} 
\eeqq
denotes the Mittag-Leffler function
(see \cite{Song_Vondracek}). It is known that $x\pi_X(x)$ is a completely monotone function
(see \cite{Gorenflo}), thus $X\in {\mathcal T}_0$. 
Applying Theorem \ref{theorem_main} and Proposition \ref{prop_Thorin_class}, and again making use of Remark \ref{mean}, we obtain the following family of subordinators. 
\begin{proposition}\label{prop_geom_stable}
Assume that $c>0$, $\theta>0$ and $\alpha \in (0,1)$.  For $q>0$ define $\phi(q)$ as the unique solution to the equation 
\beqq
z- c\ln\left(1+(\theta z)^{\alpha}\right)=q. 
\eeqq
Then the function $\Phi_Y(z)=\phi(z)-\phi(0)-z$ is the Laplace exponent of a finite mean subordinator $Y \in {\mathcal T}_0$.  
The transition probability density of the subordinator $Y$  is given by
\[
p_Y(t,y)= {\rm e}^{\phi(0)t} \frac{\alpha c t}{y} 
\sum\limits_{k\ge 0}  \frac{(-1)^k(1+c(t+y))_k}{\Gamma(1+\alpha(c(t+y)+k)) k! }  \left(\frac{y}{\theta}\right)^{\alpha(c(t+y)+k)}, \qquad y,t>0.
\]
The density of the L\'evy measure is given by
\[
\pi_Y(y)= \frac{\alpha c}{y}
\sum\limits_{k\ge 0}  \frac{(-1)^k(1+c y)_k}{\Gamma(1+\alpha(c y+k)) k! }  \left(\frac{y}{\theta}\right)^{\alpha(c y+k)}, \qquad y>0.
\]
\end{proposition}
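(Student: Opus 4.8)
The plan is to verify that this proposition is simply an instance of the general machinery already assembled, applied to the geometric stable subordinator $X$ with $\Phi_X(z)=c\ln(1+(\theta z)^\alpha)$. First I would confirm that $X\in\mathcal{T}_0$, which is already asserted in the text preceding the statement: $x\pi_X(x)=c\alpha\,E_\alpha(-(x/\theta)^\alpha)$ is completely monotone by the cited result of Gorenflo et al., and $X$ has zero drift (visible from $\Phi_X(z)/z\to 0$ as $z\to\infty$, since $\Phi_X$ grows only logarithmically). Hence all of Theorem \ref{theorem_main}, Proposition \ref{thm_compl_monotone} and Proposition \ref{prop_Thorin_class} apply verbatim with this choice of $X$.

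Next I would invoke Theorem \ref{theorem_main}(i) to conclude that $\Phi_Y(z)=\phi(z)-\phi(0)-z$ is the Laplace exponent of a subordinator $Y\in\mathcal{N}$, where $\phi$ solves $z-\Phi_X(z)=q$, i.e. $z-c\ln(1+(\theta z)^\alpha)=q$; and then Proposition \ref{prop_Thorin_class} upgrades this to $Y\in\mathcal{T}_0$. For the transition density, Theorem \ref{theorem_main}(ii) gives $p_Y(t,y)=\frac{t}{t+y}\mathrm{e}^{\phi(0)t}p_X(t+y,y)$, and substituting the stated series expression for $p_X(t,x)$ with $t\mapsto t+y$, $x\mapsto y$, the factor $\frac{t}{t+y}$ cancels against the $ct+\cdots$ that becomes $c(t+y)$ in the prefactor $\alpha c(t+y)/y$, leaving exactly the claimed formula. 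Similarly $\Pi_Y(\d y)=\frac{1}{y}p_X(y,y)\,\d y$ from \eqref{formula_PiY} produces the stated $\pi_Y(y)$ by setting $t=y$ (so $ct\mapsto cy$) in the series for $p_X$. The finiteness of the mean follows from Remark \ref{mean}: $\mathbb{E}[Y_1]=1/\psi'_\xi(\phi(0))-1$ is finite precisely because $\psi'_\xi(z)=1-\Phi_X'(z)$ and $\Phi_X'(0)=0$ here (indeed $\Phi_X'(z)=c\alpha\theta^\alpha z^{\alpha-1}/(1+(\theta z)^\alpha)\cdot z^0$... more precisely $\Phi_X'(z)\sim c\alpha\theta^\alpha z^{\alpha-1}\to 0$ as $z\to 0$ since $\alpha<1$ makes this vanish), so $\psi'_\xi(0+)=1\ne 0$, and combined with $\phi(0)\ge 0$ one checks $\psi'_\xi(\phi(0))>0$.

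There is essentially no hard analytical obstacle here: the proposition is a mechanical specialization, and the only points requiring a sentence of care are (a) the cancellation of the $t/(t+y)$ prefactor against the series normalization — a routine bookkeeping check — and (b) confirming the branch/uniqueness of $\phi(q)$, which is immediate since $z\mapsto z-\Phi_X(z)$ is continuous with derivative $1-\Phi_X'(z)$ that is positive for all $z>0$ (as $\Phi_X'(z)\le c\alpha/z\cdot(\theta z)^\alpha/(1+(\theta z)^\alpha)$ is bounded and in fact $<1$ everywhere, because $\Phi_X$ is concave and sublinear), so the map is a strictly increasing bijection of $(0,\infty)$ onto $(0,\infty)$. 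I would therefore write the proof in three or four lines: state that $X\in\mathcal{T}_0$ by the cited facts, apply Theorem \ref{theorem_main} and Proposition \ref{prop_Thorin_class}, substitute the explicit $p_X$ and $\pi_X$ into \eqref{formula_pY_pX} and \eqref{formula_PiY}, and finish with the mean computation from Remark \ref{mean}. If any step deserves emphasis it is the observation that the geometric stable $\Phi_X$ is exactly of the form treated by the general theorem, so that nothing new beyond substitution is needed.
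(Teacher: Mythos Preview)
Your overall approach---apply Theorem \ref{theorem_main} and Proposition \ref{prop_Thorin_class} to the geometric stable $X$, substitute the explicit $p_X$ into \eqref{formula_pY_pX} and \eqref{formula_PiY}, and invoke Remark \ref{mean} for the finite mean---is exactly what the paper does (the ``proof'' in the paper is the single sentence preceding the proposition).

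However, two of your side computations are wrong in a way you should fix before writing it up. First, $\Phi_X'(z)=c\alpha\theta^{\alpha}z^{\alpha-1}/(1+(\theta z)^{\alpha})$ behaves like $c\alpha\theta^{\alpha}z^{\alpha-1}$ near $0$, and since $\alpha\in(0,1)$ the exponent $\alpha-1$ is \emph{negative}, so $\Phi_X'(0^+)=+\infty$, not $0$. Consequently $\psi_\xi'(0^+)=1-\Phi_X'(0^+)=-\infty$, and the map $z\mapsto z-\Phi_X(z)$ is \emph{not} increasing on all of $(0,\infty)$; it first decreases and then increases (convexity of $\psi_\xi$). Uniqueness of $\phi(q)$ for $q>0$ comes from the general convexity argument in the paper's setup, not from global monotonicity. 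Second, for the mean: since $\psi_\xi'(0^+)=-\infty<0$ we have $\phi(0)>0$, and by strict convexity of $\psi_\xi$ the derivative $\psi_\xi'(\phi(0))$ is strictly positive (the largest zero of a convex function lies to the right of its minimum). So $\mathbb{E}[Y_1]=1/\psi_\xi'(\phi(0))-1<\infty$ as claimed, but via $\phi(0)>0$, not via $\psi_\xi'(0^+)=1$. With these corrections your write-up matches the paper.
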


\subsection{Inverse Gaussian subordinator}\label{subsec_IG}

If we consider an inverse Gaussian subordinator $X$, having Laplace exponent 
$\Phi_X(z)=c(\sqrt{1+\theta z}-1)$, then it is easy to see that the subordinator $Y_t=\tau_t^+$, constructed from 
$X$ via Theorem \ref{theorem_main}, is also in the class of inverse Gaussian subordinators. This is not surprising, since the inverse Gaussian subordinator itself appears as the first passage time 
of the Brownian motion with drift, and one can show that applying this construction repeatedly does not produce new families of subordinators (see the discussion on page  \pageref{discussion_repeated_Kendall}).

\section{Applications}\label{sec_applications}

The results that we have obtained in the previous sections have interesting and non-trivial implications for Analysis and Special Functions. Every family of subordinators that we have discussed above leads to an explicit Laplace transform identity of the form
\beq\label{explicit_Laplace}
\int_0^{\infty} {\rm e}^{-zy} \p(Y_t \in \d y) ={\rm e}^{-t \Phi_Y(z)},\qquad z\geq 0,
\eeq
and it seems that in all of these cases (except for the first example involving Poisson process) we obtain new Laplace transform identities. We do not know of a simple direct analytical proof of these results (we have found one way to prove them, but this method is just a complex-analytical counterpart of the original probabilistic proof of Kendall's identity). 

Below we present a number of analytical statements that follow from our results in Section \ref{sec_examples}. 

\vspace{0.25cm}
\noindent
{\bf Example 1:} For $r<0$ and $t \in (0,{\rm e}^{-1})$ 
\beq\label{W_formula1}
\left(\frac{W_{-1}(-t)}{-t}\right)^{r}={\rm e}^{-rW_{-1}(-t)}=-\int\limits_{-r}^{\infty} r\frac{(w+r)^{w-1}}{\Gamma(1+w)} t^{w} \d w.
\eeq
This formula seems to be new, and it is a direct analogue of the known result 
\beq\label{W_formula2}
\left( \frac{W_0(-z)}{-z}\right)^r={\rm e}^{-r W_0(-z)}=
\sum\limits_{n\ge 0} r \frac{(n+r)^{n-1}}{n!} z^n, \;\;\; r\in \c, \; \vert z \vert<1/e,
\eeq
which can be found in \cite{Corless2}. 
Formula \eqref{W_formula2} can be obtained in two ways. The first 
one 
is the classical analytical approach 
via Lagrange inversion theorem (see 
\cite{Corless2}). The second approach is via proposition \ref{prop_Poisson} and \eqref{explicit_Laplace}. This example seems to indicate that when the subordinator $X$ in Theorem \ref{theorem_main} has support on the lattice, 
then Kendall's identity is an analytical statement which is equivalent to Lagrange inversion formula. 
Formula \eqref{W_formula1} is obtained in a similar way from Proposition \eqref{prop_Gamma}, and we hypothesize that in the general case  Kendall's identity can be considered as an integral analogue of Lagrange inversion formula. 
\vspace{0.25cm}

\noindent
{\bf Example 2:} Proposition \ref{prop_stable1} and \eqref{explicit_Laplace} give us the following resut: For $q>0$ we have
\beq
\int\limits_0^{\infty}  \sqrt{\frac{t+y}{y^3}} K_{\frac{1}{3}} \left(\frac{2}{3}\sqrt{\frac{(t+y)^3}{3y}} \right) {\rm e}^{-qy} \d y  =\frac{3\pi}{t} {\rm e}^{t(q-\phi(q))},
\eeq
where $\phi(q)$ is the solution to $z-z^{\frac{1}{3}}=q$.
\vspace{0.25cm}

\noindent
{\bf Example 3:} Proposition \ref{prop_stable1} and \eqref{explicit_Laplace} give us the following resut: For $q>0$ we have
\beq
\int\limits_0^{\infty}  \frac{{\rm e}^{-\frac{2}{27} \frac{(t+y)^3}{y^2}}}{y(t+y)} W_{\frac{1}{2},\frac{1}{6}} 
\left(\frac{4}{27}\frac{(t+y)^3}{y^2} \right) {\rm e}^{-qy} \d y  =\sqrt{\frac{\pi}{3}}\frac{1}{t} 
 {\rm e}^{t(q-\phi(q))},
\eeq
where $\phi(q)$ is the solution to $z-z^{\frac{2}{3}}=q$.
\vspace{0.25cm}

\noindent
{\bf Example 4:} From formula \eqref{Zolotarev_duality} we find that
\beqq
g(x;\tfrac{3}{2})=x^{-\frac{5}{2}}g(x^{-\frac{3}{2}};\tfrac{2}{3})= \sqrt{\frac{3}{\pi}}x^{-1} 
{\rm e}^{-\frac{2}{27}x^3}W_{\frac{1}{2},\frac{1}{6}}\left(\frac{4}{27}x^3\right).
\eeqq
Then Proposition \ref{prop_stable2} and \eqref{explicit_Laplace} give us the following resut: For $q>0$ we have
\beq
\int\limits_0^{\infty}  \frac{{\rm e}^{-\frac{2}{27} \frac{(t-y)^3}{y^2}}}{y(t-y)} W_{\frac{1}{2},\frac{1}{6}} 
\left(\frac{4}{27}\frac{(t-y)^3}{y^2} \right) {\rm e}^{-qy} \d y  =\sqrt{\frac{\pi}{3}}\frac{1}{t} {\rm e}^{-t\phi(q)},
\eeq
where $\phi(q)$ is the solution to $z+z^{\frac{3}{2}}=q$.
\vspace{0.25cm}

\noindent
{\bf Example 5:} Proposition \ref{prop_Bessel} and \eqref{explicit_Laplace} give us the following resut: For $q>0$, $c>0$ we have
\beq
\int\limits_0^{\infty} {\rm e}^{-y(\frac{1}{\theta}+q)}
I_{c(t+y)}\left(\frac{y}{\theta}\right)\frac{\d y}{y}=\frac{1}{ct} {\rm e}^{t(q-\phi(q))},
\eeq
where $\phi(q)$ is the solution to $z-c \ln\left(1+\theta z +\sqrt{(1+\theta z)^2 -1} \right)=q$.
\vspace{0.25cm}

\noindent
{\bf Example 6:} We recall that a subordinator $X$ belongs to the Thorin class ${\mathcal T}_0$ if and only if $x\pi_X(x)$ is a completely monotone function (where $\pi_X(x)$ is the L\'evy density of $X$). The fact that subordinators constructed in Propositions \ref{prop_Gamma},
\ref{prop_stable1}, \ref{prop_stable2}, \ref{prop_Bessel} and \ref{prop_geom_stable} belong to the class ${\mathcal T}_0$ implies that the following functions 
\beqq
f_1(y)&=&\frac{y^{cy}{\rm e}^{-y}}{\Gamma(1+cy)}, \;\;\; c>0, \;  y>0,  \\
f_2(y)&=&y^{-\frac{1}{\alpha}} g(y^{1-\frac{1}{\alpha}};\alpha), \;\;\; \alpha \in (0,1), \; y>0, \\
f_3(y)&=&y^{-\frac{1}{\alpha}} g(-y^{1-\frac{1}{\alpha}};\alpha), \;\;\; \alpha \in (1,2), \; y>0, \\
f_4(y)&=&{\rm e}^{-y} I_{cy}(y), \;\;\; c>0, \; y>0, \\
f_5(y)&=&\sum\limits_{k\ge 0}  \frac{(-1)^k(1+c y)_k}{\Gamma(1+\alpha(c y+k)) k! }  y^{\alpha(c y+k)}, \;\;\; c>0, \;
\alpha \in (0,1), \; y>0, 
\eeqq
are completely monotone. We are not aware of any simple analytical proof of this result.

\vspace{0.25cm}

\paragraph{Acknowledgements}
A. Kuznetsov acknowledges the support by the
Natural Sciences and Engineering Research Council of Canada. 
M.~Kwa\'snicki was supported by Polish National Science Centre (NCN) grant no. 2011/03/D/ST1/00311.
We would like to thank Takahiro Hasebe for providing many helpful comments on the paper, for pointing out the connection with GGC distributions and for proving Proposition \ref{prop_Thorin_class}. A. E. Kyprianou would like to thank Victor Rivero and Jean Bertoin for discussion.  We  would also like to thank V. Vinogradov for pointing out a number of important references from the statistics literature which escaped our  attention.


\end{document}